\newtheorem{proposition*}{Proposition} 
\newtheorem{definition*}{Definition} 
\newtheorem{remark*}{Remark} 
\newtheorem{remark}[theorem]{Remark} 
\def\cM{{\cal M}}
\def\cO{{\cal O}}
\def\cS{{\cal S}}
\def\cT{{\cal T}}
\def\Z{{\rm {{\rm Z}\kern-.28em{\rm Z}}}}
\def\R{{\rm \hbox{I\kern-.2em\hbox{R}}}}
\def\be{\begin{equation}}
\def\ee{\end{equation}}
\def\<{\langle}
\def\>{\rangle}
\def\sm{\setminus}
\def\gsim{\hbox{\kern -.2em\raisebox{-1ex}{$~\stackrel{\textstyle>}{\sim}~$}}\kern -.2em}
\def\lsim{\hbox{\kern -.2em\raisebox{-1ex}{$~\stackrel{\textstyle<}{\sim}~$}}\kern -.2em}
\DeclareMathOperator\interp{{\rm I}}
\newtheorem{remarkIntro}{Remark}
\def\vp{\varphi}
\newcommand\trans{\mathrm T}
\DeclareMathOperator\argmin{argmin}
\newcommand\stext[1]{\ \text{ #1 } \ }
\def\ve{\varepsilon}
\newcommand\tr{\textcolor{red}}
\DeclareMathOperator\Id{Id}
\def\vp{\varphi}
\def\<{\langle}
\def\>{\rangle}
\def\sm{\setminus}
\def\cB{{\cal B}}
\def\cM{{\cal M}}
\def\cO{{\cal O}}
\def\cS{{\cal S}}
\def\cT{{\cal T}}
\def\Chi{\raise .3ex
\hbox{\large $\chi$}} \def\vp{\varphi}
\def\lsima{\hbox{\kern -.6em\raisebox{-1ex}{$~\stackrel{\textstyle<}{\sim}~$}}\kern -.4em}
\def\lsim{\hbox{\kern -.2em\raisebox{-1ex}{$~\stackrel{\textstyle<}{\sim}~$}}\kern -.2em}
\def\({\Bigl (}
\def\){\Bigr )}
\def\({\Bigl (}
\def\){\Bigr )}
\newcommand{\be}{\begin{equation}}
\newcommand{\ee}{\end{equation}}
\newcommand{\bea}{$$ \begin{array}{lll}}
\newcommand{\eea}{\end{array} $$}
\newcommand{\bi}{\begin{itemize}}
\newcommand{\ei}{\end{itemize}}
\newtheorem{theorem}{Theorem}[section]
\newtheorem{remark}[theorem]{Remark}
\newtheorem{lemma}[theorem]{Lemma}
\newtheorem{proposition}[theorem]{Proposition}
\newtheorem{corollary}[theorem]{Corollary}
\newtheorem{definition}[theorem]{Definition}
\newtheorem*{remark*}{Remark}
\newtheorem*{proposition*}{Proposition}
\newtheorem*{definition*}{Definition}
\def\I{{\rm \hbox{I\kern-.2em\hbox{I}}}}
\def\P{{\rm \hbox{I\kern-.2em\hbox{P}}}}
\def\sP{{\rm \hbox{\scriptsize I\kern-.2em\hbox{\scriptsize P}}}}
\def\H{{\rm \hbox{I\kern-.2em\hbox{H}}}}
\def\R{{\rm \hbox{I\kern-.2em\hbox{R}}}}
\def\N{{\rm \hbox{I\kern-.2em\hbox{N}}}}
\def\Z{{\rm {{\rm Z}\kern-.28em{\rm Z}}}}
\def\C{{\rm \hbox{C\kern -.5em {\raise .32ex \hbox{$\scriptscriptstyle
|$}}\kern-.22em{\raise .6ex \hbox{$\scriptscriptstyle |$}}\kern .4em}}}
\def\sC{{\rm \hbox{\scriptsize \rm C\kern -.6em {\raise .4ex \hbox{$\scriptscriptstyle \scriptsize|$}}\kern-.22em{\raise .5ex \hbox{$\scriptscriptstyle \scriptsize |$}}\kern .4em}}}
\def\ve{\varepsilon}
\def\cO{\mathcal O}
\newcommand\trans{\mathrm T}
\def\cM{\mathcal M}
\def\bac{$$\left\{\begin{array}}
\def\eac{\end{array}\right.$$}
\def\beqa{\begin{eqnarray*}}
\def\eeqa{\end{eqnarray*}}
\def\gsim{\hbox{\kern -.2em\raisebox{-1ex}{$~\stackrel{\textstyle>}{\sim}~$}}\kern -.2em}
\DeclareMathOperator\Id{Id}
\DeclareMathOperator\argmin{argmin}
\DeclareMathOperator\supp{supp}
\DeclareMathOperator\interp{{\rm I}}
\newcommand\tr{\textcolor{red}}
\newtheorem{question}{Question}
\newtheorem{programme}[question]{Programmation}
\def\bques{\begin{question}}
\def\eques{\end{question}}
\def\bprog{\begin{programme}}
\def\eprog{\end{programme}}
\def\II{\mathrm {I\kern-0.1exI}}
\newcommand\stext[1]{\ \text{ #1 } \ }
\def\sR{{\rm \hbox{\scriptsize I\kern-.2em\hbox{\scriptsize R}}}}
\def\sN{{\rm \hbox{\scriptsize I\kern-.2em\hbox{\scriptsize N}}}}
\def\sC{{\rm \hbox{\scriptsize \rm C\kern -.6em {\raise .4ex \hbox{$\scriptscriptstyle \scriptsize|$}}\kern-.22em{\raise .5ex \hbox{$\scriptscriptstyle \scriptsize |$}}\kern .4em}}}
\def\pathPic{Illustrations/}
\DeclareMathOperator\length{length}
\DeclareMathOperator\dist{d}
\DeclareMathOperator\distC{D}
\def\trial{trial}
\def\accepted{accepted}
\def\dim{m}
\DeclareMathOperator\distance{dist}
\def\dtimes{\distance}
\def\assign{\leftarrow}
\def\cB{\mathcal B}
\def\Vladimirsky{A.\ Vladimirsky}
\begin{document}
\title{Anisotropic Fast-Marching on cartesian grids\\ using Lattice Basis Reduction%
\thanks{
This work was partly supported by ANR grant NS-LBR ANR-13-JS01-0003-01. 
}
} 
\author{Jean-Marie Mirebeau\footnote{CNRS, University Paris Dauphine, UMR 7534, Laboratory CEREMADE, Paris, France.}}
\maketitle
\date{}
\begin{abstract}
We introduce a modification of the Fast Marching algorithm, which solves the anisotropic eikonal equation associated to an arbitrary continuous Riemannian metric $\cM$, on a two or three dimensional domain. 
The algorithm has a complexity $\cO(N \ln N + N \ln \kappa(\cM))$, where $N$ is the discrete domain cardinality. The logarithmic dependency in the maximum anisotropy ratio $\kappa(\cM)$ of the Riemannian metric allows to handle extreme anisotropies for a limited numerical cost. 
We prove the consistence of the algorithm, and illustrate its efficiency by numerical experiments.
The algorithm relies on the computation at each grid point $z$ 
of a special system of coordinates: a \emph{reduced} basis of the lattice $\Z^m$, with respect to the symmetric positive definite matrix $\cM(z)$ encoding the desired anisotropy at this point. 
\end{abstract}

\section*{Introduction}

The anisotropic Eikonal equation, or static Hamilton-Jacobi equation, is a Partial Differential Equation (PDE) which describes an elementary front propagation model: the speed of the front depends only on the front position and orientation. 
This PDE is encountered in numerous applications, such as motion planning control problems \cite{SV03}, modeling of bio-medical phenomena \cite{SKD07}, and image analysis \cite{PPKC10}. 
It was also recently used in the context of medical image analysis \cite{BC10} for extracting vessels in two dimensional projections or three dimensional scans of the human body, and for performing virtual endoscopies. This application requires to solve a highly anisotropic generalized eikonal equation with a high resolution on a cartesian grid, at a computational cost compatible with user interaction. It is one of our key motivations.

This paper is devoted to the construction and the study of a new algorithm, Fast Marching using Lattice Basis Reduction (FM-LBR), designed to solve the anisotropic eikonal equation associated to a given Riemannian metric $\cM$, and able to handle large or even extreme anisotropies. The domain must be of dimension two or three, and discretized on a cartesian grid.
The FM-LBR, as its name indicates, is a variant of the classical Fast Marching algorithm \cite{SV03,T95}, an efficient method for solving the eikonal equation when the metric is isotropic (proportional at each point to the identity matrix).
Lattice Basis Reduction \cite{NS09} is a concept from discrete mathematics, used in the FM-LBR to produce sparse causal stencils for the discretization of the eikonal equation; it allows to benefit in an optimal way of the interplay between the Riemannian geometric structure of the PDE, and the arithmetic structure of the discretization grid.
A similar technique is used in \cite{FM13} to construct sparse non-negative stencils for anisotropic diffusion.

In order to illustrate the specificity of our approach, we need to introduce some notation. 
Denote by $S_m^+$ the collection of $m\times m$ symmetric positive definite matrices, and associate to each $M \in S_m^+$ the norm $\|u\|_M := \sqrt{\<u, M u\>}$ on $\R^m$. Consider a bounded open domain $\Omega \subset \R^m$, equipped with a Riemannian metric $\cM \in C^0(\overline \Omega, S_m^+)$. We address the anisotropic eikonal equation: find the unique viscosity \cite{L82} solution $\distC : \overline \Omega \to \R$ of
\be
\label{eikonal}
\left\{
\begin{array}{rl}
\|\nabla \distC(z)\|_{\cM(z)^{-1}} = 1 & \text{ for almost every } z\in \Omega,\\
\distC = 0 & \text{ on } \ \partial \Omega.
\end{array}
\right.
\ee
See the end of \S \ref{sec:MainResults} for different boundary conditions, and algorithmic restrictions on the dimension $m$.
Introduce the Riemannian length of a Lipschitz path $\gamma : [0,1] \to \overline \Omega$:
\begin{equation}
\label{def:Length}
\length(\gamma) := \int_0^1 \|\gamma'(t) \|_{\cM(\gamma(t))} dt,
\end{equation}
and denote by $\distC(x,y)$ the length of the shorted path joining $x,y \in \overline \Omega$, also referred to as the Riemannian distance between these points. 
The PDE \eqref{eikonal} admits an optimal control interpretation: $\distC(x) = \min \{\distC(x,y); \, y \in \partial \Omega\}$ is the minimal distance from $x \in \Omega$ to the boundary. 

Consider a discrete set $Z \subset \R^m$, which in our case will be a cartesian grid. 
Discretizations of \eqref{eikonal} take the form of a fixed point problem: find $\dist : Z \to \R$ such that 
\be
\label{discreteSys}
\left\{
\begin{array}{ll}
\dist(z) = \Lambda(\dist,z)  & \text{ for all } z\in Z \cap \Omega,\\
\dist(z) = 0 & \text{ for all } z \in Z \sm \Omega.
\end{array}
\right.
\ee
This formulation involves the Hopf-Lax update operator $\Lambda(\dist,z)$ \cite{SV03,BR06,KushnerDupuis92,GonzalesRofman85}, which mimics at the discrete level Belmann's optimality principle associated to the optimal control interpretation of \eqref{eikonal}. See Appendix \ref{subsec:Accuracy} for details on this principle, the approximations underlying its discretization, and their accuracy. The definition of $\Lambda(\dist, x)$ involves a mesh (or \emph{stencil}) $V(x)$ of a small neighborhood of $x \in Z \cap \Omega$, with vertices on $Z$, and reads:
\begin{equation}
\label{def:HopfLax}
\Lambda(\dist,x) := \min_{y \in \partial V(x)} \|x-y\|_{\cM(x)} + \interp_{V(x)} \dist (y),
\end{equation}
where $\interp_{V}$ denotes piecewise linear interpolation on a mesh $V$. (In this paper, a mesh in $\R^m$ is a finite collection $\cT$ of $m$-dimensional non-flat simplices, which is conforming in the sense that the intersection $T \cap T'$ of any $T,T' \in \cT$ is the convex hull of their common vertices.)

Numerical solvers of the eikonal equation differ by (i) the construction of the stencils $V(z)$, $z\in \Omega$, and (ii) the approach used to solve the system \eqref{discreteSys}, which is inspired by the algorithms of Bellmann-Ford or of Dijkstra used for computing distances in graphs, instead of continuous domains. 
The algorithm presented in this paper, FM-LBR, belongs to the category of Dijkstra inspired algorithms with static stencils, and among these is the first one to guarantee a uniform upper bound on the stencil cardinality, independently of the  Riemannian metric $\cM$. The anisotropy ratio $\kappa(M)$ of a  matrix $M\in S_\dim^+$, and the maximum anisotropy $\kappa( \cM)$ of the Riemannian metric $\cM$, are defined as follows:
\be
\label{defKappa}
\kappa(M) := \max_{\|u\|=\|v\|=1} \frac{\|u\|_M}{\|v\|_M} = \sqrt{\|M\| \|M^{-1}\|}, \qquad \kappa(\cM) := \max_{z\in \overline \Omega} \kappa(\cM(z)).
\ee
We denote by $N :=  \#(Z \cap \Omega)$ the cardinality of the discrete domain.
\begin{itemize}
\item
{\bf Bellman-Ford inspired algorithms.}
The discrete fixed point problem \eqref{discreteSys} is solved via Gauss-Seidel iteration: the replacement rule $\dist(z_k) \assign \Lambda(\dist, z_k)$ is applied for $k=0,1,2,...$ to a mutable map $\delta : Z \cap \Omega \to \R$, until a convergence criterion is met. 
In the fast sweeping methods, see \cite{TsaiChengOsherZhao03} and references therein,
the sequence of points $(z_k)_{k \geq 0}$ enumerates repeatedly the lines and the columns of $Z \cap \Omega$. 
Alternatively this sequence is obtained via a priority queue in the Adaptive Gauss-Seidel Iteration (AGSI) of Bornemann and Rasch \cite{BR06}. The stencil $V(z)$ of a point $z\in Z \cap \Omega$ is usually the offset by $z$ of a fixed stencil $V$ given at the origin, such as those illustrated on Figure \ref{fig:Classical}.

Fast sweeping methods have $\cO(\lambda(\cM) N)$ complexity when the metric $\cM$ is \emph{isotropic} (proportional to the identity at each point), but this result does not extend to anisotropic Riemannian metrics, see \cite{Zhao05} for the proof and the expression of $\lambda(\cM)$. The AGSI has complexity $\cO(\mu(\cM) N^{1+\frac 1 \dim})$, for arbitrary anisotropic Riemannian metrics, where $\mu(\cM)$ is a non explicit constant which depends on global geometrical features of the metric \cite{BR06}. The AGSI is a popular, simple, and quite efficient method, which is included for comparison in our numerical tests.

\item 
{\bf Dijkstra inspired algorithms.} The system \eqref{discreteSys} is solved in a single pass, non-iteratively, using an  ordering of $Z \cap \Omega$ determined at run-time. 
This is possible provided the Hopf-Lax update operator satisfies the so-called ``causality property'', see Proposition \ref{prop:Causality}, which can be ensured if the stencil $V(z)$ of each $z\in Z \cap \Omega$ satisfies some geometrical properties depending on $\cM(z)$, see Definition \ref{def:Mesh}. 
The different Dijkstra inspired methods are characterized by the construction of the stencils $V(z)$, in contrast with Bellman-Ford inspired methods which are characterized by the choice of the sequence $(z_k)_{k \geq 0}$.
Solving the system \eqref{discreteSys} with a Dijkstra inspired algorithm has complexity $\cO(\mu(\cM) N \ln N)$, where $\mu(\cM)$ is an upper bound on the cardinality of the stencils (the number of simplices they are built of).

In the Ordered Upwind Method (OUM) of Sethian and Vladimirsky \cite{SV03,VladTime06}, the stencils are constructed at run-time; their cardinality is bounded by $\cO(\kappa(\cM)^\dim)$ and drops to $\cO(\kappa(\cM)^{\dim-1})$ as $N \to \infty$. In contrast, the stencils are constructed during a preprocessing step and then static in the Monotone Acceptance Ordered Upwind Method (MAOUM) of Alton and Mitchell \cite{AltonMitchell12}; their cardinality is bounded by $\cO(\kappa(\cM)^\dim)$. The FM-LBR introduced in the present work uses a similar approach, except that the stencils cardinality is $\cO(1)$, fully independent of the Riemannian metric $\cM$. The complexity estimates are thus $\cO(\kappa(\cM)^\dim N \ln N)$ for the OUM and the MAOUM (asymptotically $\cO(\kappa(\cM)^{\dim-1} N \ln N)$ for the OUM), and $\cO(N \ln N + N \ln \kappa(\cM))$ for our approach the FM-LBR, where the second term in the complexity accounts for the stencil construction. 
\end{itemize}

The above mentioned algorithms are consistent for the anisotropic eikonal equation associated to an arbitrary continuous Riemannian metric $\cM \in C^0(\overline \Omega, S_\dim^+)$, in the sense that the discrete output $\dist_h$ of the algorithm executed on the grid $Z_h := h \Z^2$, of scale $h>0$, converges to the viscosity solution $\distC$ of the continuous problem \eqref{eikonal} as $h \to 0$. Some more specialized variants of the fast marching algorithm are only consistent for a restricted set of metrics, but can be executed nonetheless with an arbitrary anisotropic metric $\cM$; in that case the discrete system \eqref{discreteSys} may not be solved, and the numerical results are variable, see \S \ref{sec:num}.
For instance the original fast marching algorithm \cite{T95} is consistent if $\cM(z)$ is proportional to the identity matrix for each $z\in \Omega$, and more generally if $\cM(z)$ is a diagonal matrix. In addition to these cases of isotropy and axis-aligned anisotropy, some variants are also consistent if the metric anisotropy $\kappa(\cM)$ is smaller than a given bound $\kappa_0$, see  \cite{JBTDPIB08} and Figure \ref{fig:Classical}.
Our numerical experiments \S \ref{sec:num} include for comparison one of these methods: Fast Marching using the 8 point stencil (FM-8, center left stencil on Figure \ref{fig:Classical}), which is popular in applications \cite{BC10} thanks to its short computation times and despite the lack of convergence guarantee for arbitrary metrics.
Depending on the implementation \cite{RS09,SV03,T95}, involving either a sorted list or a bucket sort, these methods have complexity $\cO(N\ln N)$ or $\cO(\Upsilon(\cM) N)$, 
where 
$$
\Upsilon(\cM) := \sqrt{\max_{z\in \Omega} \|\cM(z)\| \max_{z'\in \Omega} \|\cM(z')^{-1}\|}.
$$
In the applications for which our method is intended, one typically has $\ln(N) \lsim \kappa(\cM) \leq \Upsilon( \cM) \ll N$, in such way that the complexity $\cO(N \ln N + N \ln \kappa(\cM))$ of the proposed method is comparable to $\cO(N\ln N)$ and smaller than $\cO(\Upsilon(\cM) N)$.
In summary, the FM-LBR combines the universal consistency (i.e.\ for any Riemannian metric) of the AGSI, OUM and MAOUM, with a quasi-linear complexity just as the original Fast Marching algorithm.

 
\begin{remark*} 
Each solver of the eikonal equation comes with a specific construction of the stencils $V(z)$, $z \in \Omega$. The highly efficient, but specialized, approach used in the FM-LBR limits its potential for generalization, see the end of \S \ref{sec:MainResults}. Static adaptive stencils also have a memory impact, see Remark \ref{rem:memory}.
Since the Hopf-Lax update operator \eqref{def:HopfLax} depends on the stencils, the discrete solution $\dist$ of  \eqref{discreteSys} is scheme dependent, and so is its accuracy.
See Appendix \ref{subsec:Accuracy} for a heuristic accuracy analysis, and \cite{M12b} for the case of constant metrics.
Numerical experiments \S \ref{sec:num}, on application inspired test cases, show that the FM-LBR accuracy is competitive. 
\end{remark*}

\begin{figure}
\begin{center}
\includegraphics[width=3cm]{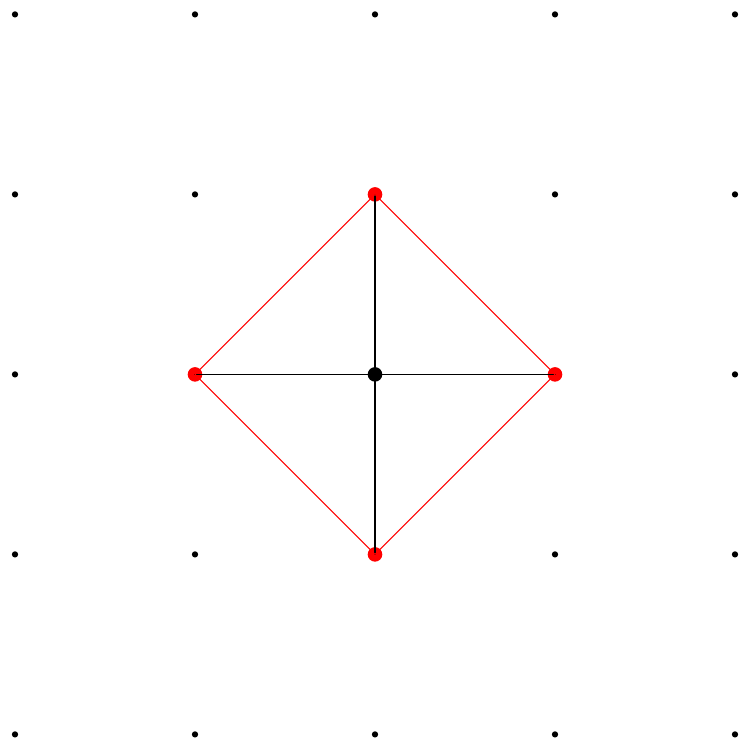}
\includegraphics[width=3cm]{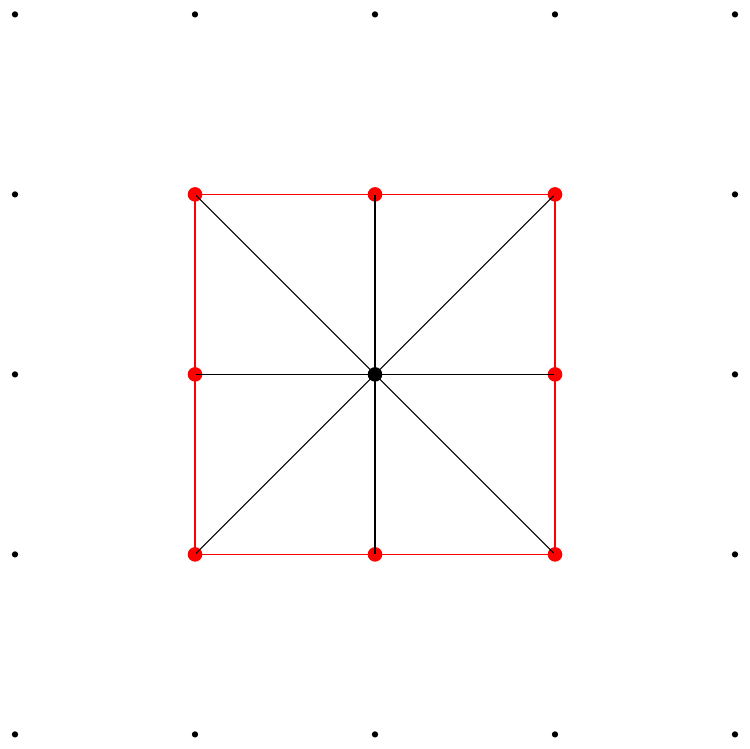}
\includegraphics[width=3cm]{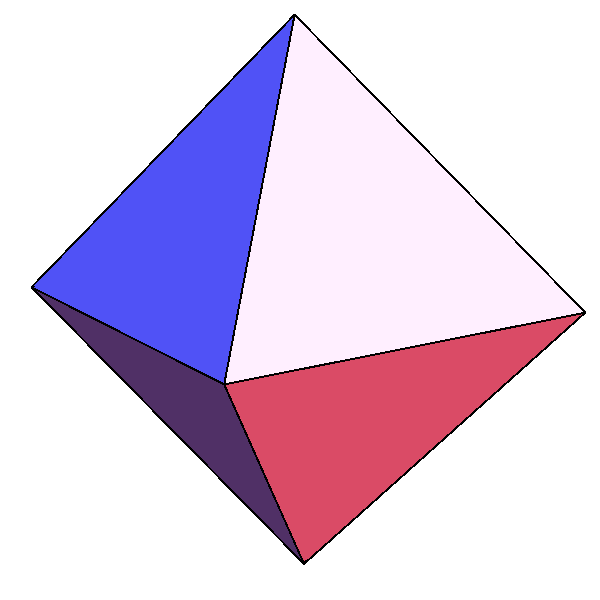}
\includegraphics[width=3cm]{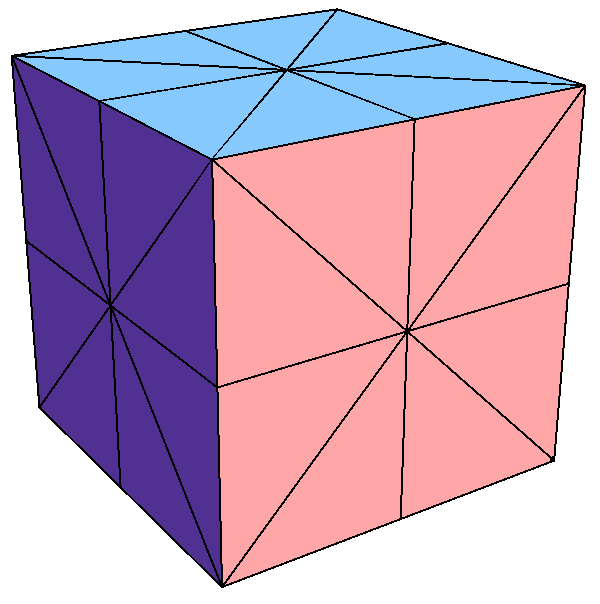}
\end{center}
\caption{
\label{fig:Classical}
Some classical stencils used in the discretization of two dimensional (left) or three dimensional (right) eikonal equations. 
The meshes are $M$-acute, a property implying discrete causality, see Definition \ref{def:Mesh}, for matrices $M$ which are diagonal or of anisotropy ratio $\kappa(M)$ bounded by respectively $1, \ 1+\sqrt 2, \ 1, \ (\sqrt 3 +1)/2$ (from left to right). 
}
\end{figure}

\section{Sparse causal stencils for the anisotropic eikonal equation}
\label{sec:MainResults}

Our main contribution is the construction of discretization stencils for anisotropic eikonal equations, which have a uniformly bounded cardinality and preserve a structural property of the PDE: causality, inherited from its interpretation as a deterministic control problem. Discrete causality, allowing to solve the fixed point system \eqref{discreteSys} in a single pass, is the following property.

\begin{proposition}[Causality property, J.A.\ Sethian, A.\ Vladimirsky, Appendix of \cite{SV03}] 
\label{prop:Causality}
Let $x \in \R^m$, and let $V$ be a finite mesh of a neighborhood of $x$. Let $M \in S_m^+$ and let us assume that $\<y-x, M (z-x) \> \geq 0$ for any vertices $y$, $z$ of a common face of $\partial V$ (acuteness condition). 
Consider a discrete map $\dist$ defined on the vertices of $V$, and the optimization problem
\begin{equation*}
\Lambda := \min_{y \in \partial V} \|x-y\|_M + \interp_V \dist (y),
\end{equation*}
where $\interp$ denotes piecewise linear interpolation. Then the minimum defining $\Lambda$ is attained on a $k$-face $[y_1, \cdots, y_k]$ of $\partial V$, with $k \leq m$, such that  $\Lambda > \dist(y_i)$ for all $1 \leq i \leq k$.
\end{proposition}
A mesh $V$ satisfying the geometric acuteness condition of Proposition \ref{prop:Causality}, is called a causal stencil, at the point $x$, and with respect to $M$.

Consider the directed graph $G$, associated to the fixed point problem \eqref{discreteSys}, where for all $x,y \in Z$ we place an arrow $x \to y$ iff the minimum defining $\dist(x) = \Lambda(\dist,x)$ is attained on a face of the stencil $\partial V(x)$ containing $y$. The causality property, states that the presence of an arrow $x \to y$ implies $\dist(x) > \dist(y)$; in particular the graph $G$ has no cycles, hence the system of equations \eqref{discreteSys} does not feature any dependency loop. The Fast-Marching algorithm \cite{T95} traces back theses dependencies, determining at run-time an  ordering $(x_i)_{i=1}^N$, of the discrete domain $Z \cap \Omega$, such that the distances $(\dist(x_i))_{i=1}^N$ are increasing. We reproduce this method for completeness, see Algorithm \ref{algo:FastMarching}, but refer to its original introduction \cite{T95}, or its use with alternative static adaptive stencils in the MAOUM \cite{AltonMitchell08}, for the proof that it solves the discrete system \eqref{discreteSys}. 

\begin{algorithm}
\label{algo:FastMarching}
\caption{The Fast Marching algorithm, with static stencils adapted to a metric.} 
\begin{tabular}{l}
\textbf{Input}:  The values $\cM(z)$ of a Riemannian metric, for all $z \in Z \cap \Omega$.\\
\textbf{Construct}  causal stencils $V(z)$, with respect to  $\cM(z)$, at all points $z\in Z \cap \Omega$.\\
\textbf{Construct}  the reversed stencils, defined by $V[y] := \{ z \in Z\cap \Omega ; \, y \text{ is a vertex of } V(z)\}$.\\
\textbf{Initialize}  a (mutable) table $\dist : Z \to \R$, to $+ \infty$ on $Z \cap \Omega$, and to $0$ elsewhere.\\
\textbf{Initialize}  a (mutable) boolean table $b : Z \to \{\trial, \accepted\}$ with $b(y) \assign \trial$ iff $V[y]\neq \emptyset$.\\
\textbf{While} there remains a $\trial$ point (i.e.\ $y\in Z$ such that $b(y) = \trial$) \textbf{do}\\
\phantom{Whi} Denote by $y$ a $\trial$ point which minimizes $\dist$, and set $b(y) \assign \accepted$.\\
\phantom{Whi} For all $x \in V[y]$, set $\dist(x) \assign \min \{ \dist(x), \, \Lambda(\dist,x; \, b,y)\}$.\\
\textbf{Output}:  The map $\dist : Z \to \R$.
\end{tabular}
\end{algorithm}

We denoted by $\Lambda(\dist,x; \, b, y)$ the modification of the Hopf-Lax update operator \eqref{def:HopfLax} in which the minimum is only taken over faces (of any dimension) of $\partial V(x)$ which vertices (i) contain $y$, and (ii) are all $\accepted$.
Regarding the FM-LBR complexity $\cO(N \ln N + N \ln \kappa(\cM))$, we refer for details to the classical analysis in \cite{T95,SV03,AltonMitchell08} and simply point out that (i) each FM-LBR causal stencil costs $\cO(\ln \kappa(\cM))$ to construct, (ii) maintaining a list of $\Omega \cap Z$, sorted by increasing values of the mutable map $\dist$, costs $\cO(\ln N)$ for each modification of a single value of $\dist$, with a proper heap sort implementation, and (iii) the optimization problem defining the Hopf-Lax update \eqref{def:HopfLax}, or its variant $\Lambda(\dist,x; \, b, y)$, has an explicit solution: the minimum associated to each face of $\partial V(x)$ is the root of a simple univariate quadratic polynomial, see  Appendix of \cite{SV03}. Memory usage is discussed in detail in Remark \ref{rem:memory}.

As announced, we limit our attention to PDE discretizations on cartesian grids, of the form
\begin{equation}
\label{def:Grid}
Z = h R (\xi + \Z^m),
\end{equation}
where $h>0$ is a scaling parameter, $R$ a rotation, and $\xi \in \R^m$ an offset (in \eqref{def:Grid} and \eqref{eq:VFromT} we abusively apply geometric transformations not only to points, but also to sets of points and meshes). The use of an unbounded grid \eqref{def:Grid} is a mathematical artifact aimed to simplify the exposition; only points of $Z$ close to $\Omega$ (precisely: such that $V[y] \neq \emptyset$) play an active role in Algorithm \ref{algo:FastMarching}. We may construct a causal stencil at $x \in Z$ with respect to $M\in S_m^+$, by suitably scaling, translating and rotating an $R^\trans M R$-acute mesh $\cT$, defined below:
\begin{equation}
\label{eq:VFromT}
V = x + h \,R \,\cT. 
\end{equation}
\begin{definition}
\label{def:Mesh}
An $M$-acute mesh, where $M \in S_m^+$, is an $m$-dimensional mesh $\cT$ covering a neighborhood of the origin, and such that the vertices $v_0, \cdots , v_m$ of any simplex $T \in \cT$ satisfy (i) $v_0=0$, (ii) $(v_1, \cdots, v_m)$ form a basis of $\Z^m$, and (iii) $\<v_i, M v_j\>\geq 0$ for all $1 \leq i < j \leq m$.
\end{definition}
The condition $|\det(v_1, \cdots, v_m)|=1$ 
ensures that $0$ (resp.\ $x$) is the only grid point in the interior of the domain covered by $\cT$ (resp.\ $V$), hence that information does not ``fly over'' some grid points when solving \eqref{discreteSys}. 
Applying the next proposition to the classical stencils of Figure \ref{fig:Classical}, we obtain that they are causal for Riemannian metrics of limited anisotropy.
\begin{proposition}
\label{propGammaT}
Let $\cT$ be an $\dim$-dimensional mesh satisfying the requirements of Definition \ref{def:Mesh}, except (iii) (which does not make sense without a given matrix $M \in S_m^+$). Let 
\be
\label{defKappaT}
\kappa(\cT) := \sqrt{\frac{1+\gamma(\cT)}{1-\gamma(\cT)}}, \quad \text{where} \quad \gamma(\cT) := \min_{T,(u,v)} \frac{\<u,v\>}{\|u\| \|v\|},
\ee
and where the minimum in $\gamma(\cT)$ is taken among all non-zero vertices $u,v$ of a common simplex $T\in \cT$. 
The mesh $\cT$ is $M$-acute for any $M\in S_\dim^+$ such that $\kappa(M) \leq \kappa(\cT)$.
\end{proposition}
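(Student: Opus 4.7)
The plan is to verify only property (c) of Definition \ref{defMesh}, since (a) and (b) are hypotheses of the proposition. Fix a simplex $T \in \cT$ and let $v_1, \ldots, v_d$ denote its non-zero vertices. The case $i = j$ is immediate since $v_i^\trans M v_i > 0$ for any $M \in S_d^+$, so I fix $i < j$ and abbreviate $v := v_i$, $w := v_j$. By the definition of $\gamma(\cT)$ we have $v^\trans w \geq \gamma(\cT)\, \|v\|\,\|w\|$, and the goal reduces to showing $v^\trans M w \geq 0$.

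The key identity I would use is the polarization formula
$$
4\, v^\trans M w \; = \; \|v+w\|_M^2 - \|v-w\|_M^2.
$$
Since $v^\trans M w$ changes by a positive factor under positive rescaling of $v$ and $w$, I may normalize $\|v\| = \|w\| = 1$ without loss of generality. Then $\|v \pm w\|^2 = 2(1 \pm v^\trans w)$, and the angle bound gives $\|v+w\|^2 \geq 2(1 + \gamma(\cT))$ together with $\|v-w\|^2 \leq 2(1 - \gamma(\cT))$. Combining with the trivial spectral bounds $\lambda_{\min}(M)\|u\|^2 \leq \|u\|_M^2 \leq \lambda_{\max}(M)\|u\|^2$, it suffices to show that
$$
2\,\lambda_{\min}(M)(1+\gamma(\cT)) - 2\,\lambda_{\max}(M)(1-\gamma(\cT)) \; \geq \; 0,
$$
which rearranges into $\kappa(M)^2 = \lambda_{\max}(M)/\lambda_{\min}(M) \leq (1+\gamma(\cT))/(1-\gamma(\cT)) = \kappa(\cT)^2$, i.e. exactly the hypothesis $\kappa(M) \leq \kappa(\cT)$.

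I do not anticipate any substantive obstacle; the argument is a direct manipulation of the polarization identity together with the definition of $\kappa(\cT)$. The one point worth noting is that each inequality in the chain is sharp in dimension two, where $M$ can be taken with one eigenvector proportional to $v-w$ and eigenvalues saturating the anisotropy ratio. This sharpness confirms that $\kappa(\cT) = \sqrt{(1+\gamma(\cT))/(1-\gamma(\cT))}$ is the optimal threshold that can be extracted from pairwise angle information alone, and explains why the definition \iref{defKappaT} takes this particular form.
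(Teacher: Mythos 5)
Your proposal is correct and is essentially the paper's own argument in contrapositive form: the paper assumes $u^\trans M v<0$, notes (via the same polarization identity) that this forces $\|u'+v'\|_M<\|u'-v'\|_M$, and derives $\kappa(M)>\kappa(\cT)$ from the same Euclidean bounds $\|u'+v'\|^2\geq 2(1+\gamma(\cT))$ and $\|u'-v'\|^2\leq 2(1-\gamma(\cT))$, whereas you run the computation directly with $\lambda_{\min}(M)$ and $\lambda_{\max}(M)$. The normalization, the key identity, and the role of the anisotropy ratio are identical, so this is the same proof.
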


\begin{proof}
Let $u,v$ be two non-zero vertices of a common simplex $T\in \cT$, and let $M\in S_\dim^+$. Let $u' := u/\|u\|$ and let $v' := v/\|v\|$. By construction one has
$$
\|u'+v'\|^2 = 2(1+\<u', v'\>) \geq 2(1+\gamma(\cT)), \qquad \|u'-v'\|^2 =  2(1-\<u',v'\>) \leq 2(1-\gamma(\cT)).
$$
Let us assume for contradiction that $\<u, M v\> < 0$, which implies that $\|u'+v'\|_M < \|u'-v'\|_M$. Observing that 
$$
\kappa(M)^2 = \|M\| \|M^{-1}\| \geq \frac {\|u'-v'\|^2_M}{\|u'-v'\|^2} \frac{\|u'+v'\|^2}{\|u'+v'\|^2_M} >\frac{1+\gamma(\cT)}{1-\gamma(\cT)},
$$
we obtain that $\kappa(M) > \kappa(\cT)$, which concludes the proof of this proposition.
\end{proof}

Our construction of $M$-acute meshes relies on special coordinate systems in the grid $\Z^m$, adapted to the anisotropic geometry encoded by $M\in S_m^+$. 
\begin{definition}[Bases and superbases]
\label{def:BasisSuperbase}
A basis of $\Z^m$ is an $m$-plet $(e_1, \cdots, e_m) \in (\Z^m)^m$ such that $|\det(e_1, \cdots, e_m)| = 1$. A superbase of $\Z^m$ is an $m+1$-plet $(e_0, \cdots, e_m)$ such that $e_0+\cdots+e_m = 0$, and $(e_1, \cdots, e_m)$ is a basis of $\Z^m$. 
\end{definition}
\begin{definition}
\label{def:ObtuseSuperbase}
A superbase $(e_0, \cdots, e_m)$ of $\Z^m$ is said to be $M$-obtuse iff $\<e_i, M e_j\> \leq 0$ for all $0 \leq i < j \leq m$.
\end{definition}
There exists for each $M \in S_m^+$, $m \in \{2,3\}$ at least one $M$-obtuse superbase \cite{CS92}. 
The construction of 2D and 3D $M$-obtuse superbases relies on lattice basis reduction algorithms  \cite{L1773,NS09} (hence the name of our numerical scheme) and has cost $\cO(\ln \kappa(M))$, see \S \ref{sec:reduced}. 
We arrive at the main contribution of this paper: the FM-LBR  stencils, which are causal and of bounded cardinality. 

\begin{figure}
\begin{center}
\iftoggle{siam}{
\includegraphics[width=2.5cm]{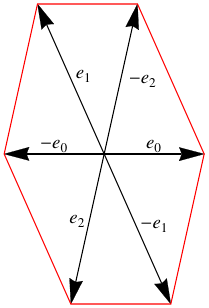}
\includegraphics[width=3.5cm]{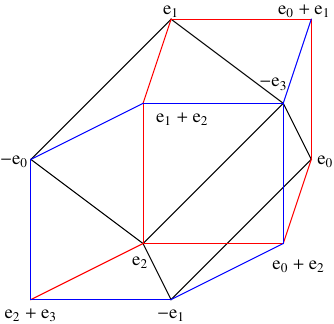}
\hspace{0.2cm}
\includegraphics[width=3cm]{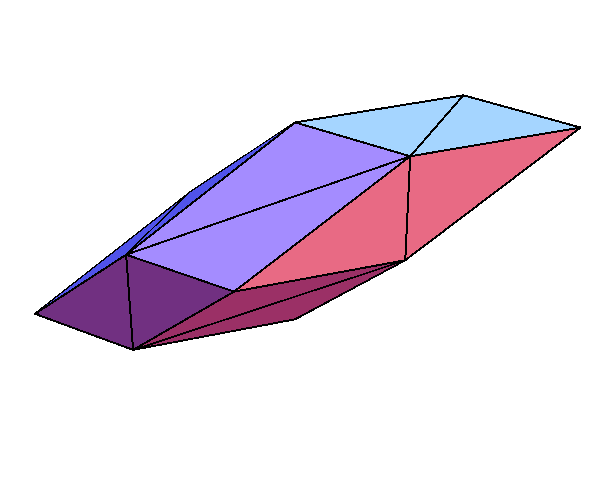}
\hspace{-0.3cm}
\includegraphics[width=3cm]{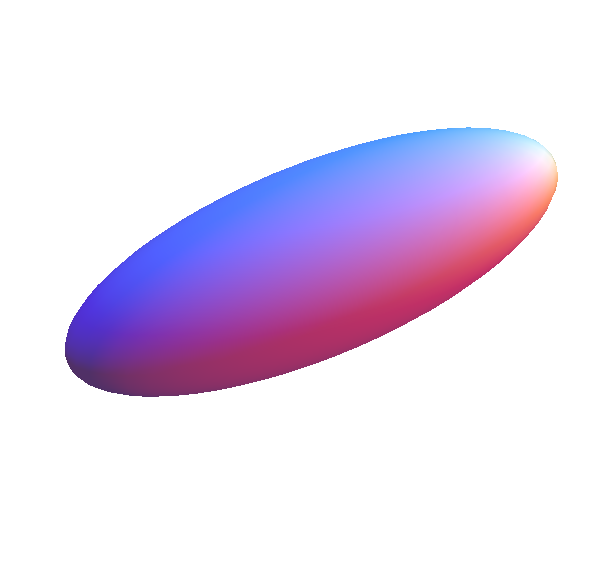}
}{
\includegraphics[width=3cm]{\pathPic FastMarchingIllus/RevisedStencils/Stencil2D_AllVertices.pdf}
\includegraphics[width=4.5cm]{\pathPic FastMarchingIllus/RevisedStencils/Stencil3D_AllVertices.pdf}
\hspace{0.2cm}
\includegraphics[width=4cm]{\pathPic FastMarchingIllus/RevisedStencils/Stencil3D_kappa=3.png}
\hspace{-0.3cm}
\includegraphics[width=4cm]{\pathPic FastMarchingIllus/RevisedStencils/Ellipse3D_kappa=3.png}
}
\end{center}
\caption{\label{figNeigh}
Connectivity of the 2D (left) and 3D (center left, interior edges omitted, missing boundary edges are symmetric w.r.t.\ the origin) FM-LBR meshes $\cT(M)$, see Proposition \ref{prop:Stencils} (note that $- e_i= \sum_{j \neq i} e_j$).
Mesh $\cT(M)$ (center right) associated to $M \in S_3^+$ of eigenvalues $3^2,1,1$, and eigenvector $(3,1,2)$ for the first eigenvalue. Right: unit ball for the norm $\|\cdot\|_M$. 
}
\end{figure}

\begin{proposition}[The FM-LBR acute meshes]
\label{prop:Stencils}
Let $M \in S_m^+$, and let $(e_0,\cdots,e_m)$ be an $M$-obtuse superbase of $\Z^m$ (if one exists). An $M$-acute mesh $\cT(M)$ is obtained by collecting the simplices of vertices $(\sum_{i=0}^{k-1} b_i)_{k=0}^{m}$ associated to all $(m+1)!$ permutations $(b_i)_{i=0}^m$ of $(e_i)_{i=0}^m$. It has $2^{m+1} -1$ vertices.
\end{proposition}

\begin{proof}
Proof of the properties of the simplices. 
Let $(b_i)_{i=0}^m$ be a permutation of $(e_i)_{i=0}^m$, and let $v_k := \sum_{i=0}^{k-1} b_i$, for all $0 \leq k \leq m$. Clearly (i) $v_0=0$, and (ii)  $|\det(v_1, \cdots, v_m)| = |\det(b_1, \cdots, b_m)| =  |\det(e_1, \cdots, e_m)|= 1$.
Acuteness condition (iii): for any $1 \leq k < l \leq m$ one has $v_l = -\sum_{j=l}^m b_j$ by Definition \ref{def:BasisSuperbase}, hence by Definition \ref{def:ObtuseSuperbase}
\begin{equation*}
\<v_k, M v_l\> = - 
\sum_{0 \leq i < k} \ \sum_{l \leq j \leq m} \<e_i,Me_j\> \geq 0.
\end{equation*}
Proof that $\cT(M)$ is a conforming mesh covering a neighborhood of the origin. Consider the $m+1$-dimensional Kuhn simplices 
$T_\sigma := \{ (\lambda_0, \cdots, \lambda_m); \, 0 \leq \lambda_{\sigma(0)} \leq \lambda_{\sigma(1)} \leq \cdots \leq \lambda_{\sigma(m)} \leq 1\}$, associated to all permutations $\sigma$ of $\{0,\cdots,m\}$, which form a conforming mesh $\cT_0$ of $[0,1]^{m+1}$. The linear map $A : \R^{m+1} \to \R^m$, defined by 
$A(\lambda_0, \cdots, \lambda_m) := -\sum_{i=0}^m \lambda_i e_i$, has a kernel generated by $v_1 := (1, \cdots,1)$. It sends $[0,1]^{m+1}$ onto a neighborhood of $0 \in \R^m$, and transforms $\cT_0$ into an $m$-dimensional conforming mesh $\cT_M$ of this neighborhood, collapsing onto $0$ the common edge $[0,v_1]$ of the simplices $T_\sigma$. Other vertices of $[0,1]^{m+1}$ have pairwise distinct images by $A$, since their difference is not proportional to $v_1$; hence $\cT_M$ has $2^{m+1}-1$ vertices (one less than $\cT_0$). 
Noticing the identity $-\sum_{i=0}^m \lambda_i e_i = \sum_{k=1}^m (\lambda_{\sigma(k)} - \lambda_{\sigma(k-1)}) \sum_{i=0}^{k-1} e_{\sigma(k)}$, we find that the image of $T_\sigma$ by $A$ is the simplex of $\cT(M)$ associated to the permuted superbase $(e_{\sigma(i)})_{i=0}^m$, hence $\cT_M=\cT(M)$, which concludes the proof.
\end{proof}

Obtuse superbases are similarly used in \cite{FM13} to produce sparse non-negative stencils for Anisotropic Diffusion (AD-LBR scheme), with different results: 3D stencils have $12$ non-zero vertices in \cite{FM13}, and $14$ here.
This illustrates the versatility of this concept, which can be used to design stencils satisfying various geometric properties: an acuteness condition for the FM-LBR (implying the scheme causality), and the non-negative decomposition of a tensor for the AD-LBR (guaranteeing the scheme monotonicity).

\begin{definition}
\label{def:Admissibility}
A family of meshes $(\cT(x))_{x \in \Omega}$, defined for all points of an open domain $\Omega \subset \R^m$ is admissible iff there exists two constants $0 < r$ and $R < \infty$ such that: for each $x\in \Omega$
\begin{itemize}
\item The mesh $\cT(x)$ covers neighborhood of $0$, and its vertices belong to $\Z^m$.
\item (Boundedness) The vertices $e$ of $\cT(x)$ satisfy $\|e\| \leq R$.
\item (Stability) There exists a basis $\cB(x)$ of $\Z^m$ which elements, and their opposites, are vertices of $\cT(y)$ for all $y \in \Omega$ such that $\|x-y\| < r$.
\end{itemize}
\end{definition}

\begin{figure}
\includegraphics[width=3cm]{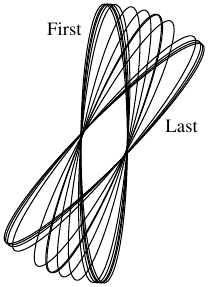}
{\raise \iftoggle{siam}{0.5cm}{0cm} \hbox{
\includegraphics[width= \iftoggle{siam}{10cm}{13cm}]{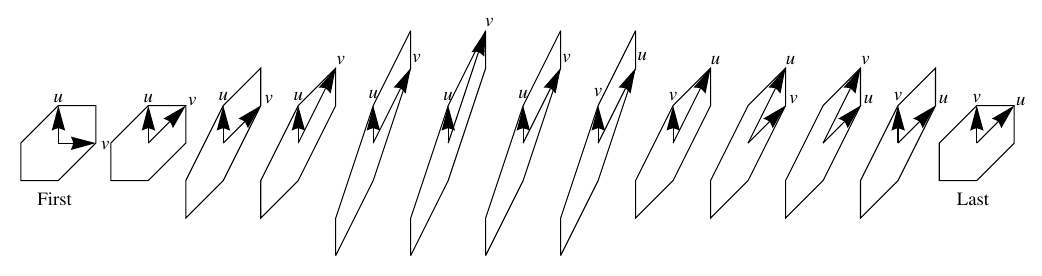}
}}
\caption{
\label{fig:BasisRotate}
The unit sphere $\{u\in \R^2; \, \|u\|_M=1\}$, an $M$-reduced basis $(u,v)$, and the boundary of the FM-LBR mesh $\cT(M)$, for some $M\in S_2^+$ of anisotropy ratio $\kappa(M)=6$, and eigenvector $(\cos\theta,\, \sin\theta)$, $\theta\in [\pi/4,\pi/2]$, associated to the small eigenvalue.
}
\end{figure}

A fortunate but non-trivial fact is that the FM-LBR family of meshes $(\cT(\cM(x)))_{x\in \Omega}$ is admissible, see Proposition \ref{prop:Admissibility} and Figures \ref{fig:BasisRotate} and \ref{fig:BasisScale}. This property implies the FM-LBR consistence, using Proposition \ref{prop:Convergence} which is a minor extension of the convergence results in \cite{T95,BR06}.

\begin{proposition}
\label{prop:Admissibility}
Let $\Omega \subset \R^m$ be open and bounded, and let $\cM\in C^0(\overline \Omega, S_m^+)$.
If $m \leq 3$ then the FM-LBR family of meshes $(\cT(\cM(x)))_{x \in \Omega}$ is admissible (with Boundedness constant $R=C_m \kappa(\cM)$, $C_2=2$, $C_3=4$). More generally, if $m \leq 4$ then any family of meshes $(\cT(x))_{x \in \Omega}$ such that $\cT(x)$ is $\cM(x)$-acute for all $x\in \Omega$, satisfies the (Stability) property.
\end{proposition}

\begin{proposition}
\label{prop:Convergence}
Let $\Omega \subset \R^m$  be an open bounded set, equipped with a Riemannian metric $\cM \in C^0(\overline \Omega, S_m^+)$, and an admissible family $(\cT(z))_{z \in \Omega}$ of meshes. For all $h>0$ let $Z_h := h \Z^2$, and for all $z \in Z_h \cap \Omega$ consider the stencil $V_h(z) := z+ h \cT(z)$. Then the solutions $\dist_h : Z_h \to \R \cup \{+\infty\}$ of the discrete system \eqref{discreteSys} converge uniformly as $h \to 0$ to the viscosity solution $\distC$ of the eikonal equation \eqref{eikonal}:
\begin{equation}
\label{eq:UnifConv}
\lim_{h \to 0} \, \max_{z \in Z_h \cap \Omega} |\dist_h(z) - \distC(z)| = 0.
\end{equation}
\end{proposition}

To sum up the FM-LBR strengths, this algorithm is universally consistent, has a competitive accuracy w.r.t.\ alternative methods, and its computational cost is almost unaffected by the Riemannian metric anisotropy.
For fairness we discuss below the potential downsides of our original stencil construction, which limits the potential for generalization (efficiency is as often at the cost of specialization), and impacts memory usage.
\begin{itemize}
\item (Finsler metrics)
The FM-LBR only applies to the anisotropic eikonal equation associated to a Riemannian metric, while other methods such as the AGSI, OUM, MAOUM \cite{BR06,SV03,AltonMitchell12} can handle more general Finsler metrics. Indeed the structure $\|\cdot\|_{\cM(z)}$ of the local norm at $z$ associated to a Riemannian metric is required in the FM-LBR stencil construction Proposition \ref{prop:Stencils}, which involves an $\cM(z)$-obtuse superbase. Finsler metrics are in contrast defined by arbitrary asymmetric norms  $|\cdot|_z$, depending continuously on $z$.


Constructing causal static stencils is an active subject of research in the case of Finsler metrics. A characterization obtained in \cite{VladMSSP}, was used in \cite{M12c} to develop the FM-ASR (Fast Marching using Anisotropic Stencil Refinement), which is close in spirit and in efficiency to the FM-LBR, but has a different application range, since it handles Finsler metrics on two dimensional domains. 
It was also observed in \cite{SethBook96,SV03,AltonMitchell08} that the canonical stencil (Figure \ref{fig:Classical}, left) is causal for Finsler metrics featuring only axis-aligned anisotropy, in such way that the original fast marching algorithm can be applied. 

\item (Domain discretization)
The FM-LBR requires a cartesian grid discretization. In contrast an important research effort \cite{BS98,BR06,KS98,LYC03,SV03} has been devoted to the more difficult setting of meshed domains, with an unstructured set of vertices. 
These methods are natural candidates for the computation of distances on a manifold $\cS \subset \R^m$, while the FM-LBR would require a (collection of) local chart(s) $\vp : \Omega \to \cS$ equipped with the metric $\cM(z) := \nabla \vp(z)^\trans \nabla \vp(z)$.
The FM-LBR heavily relies on the cartesian grid arithmetic structure, through the concept of obtuse superbases.

\item (Boundary conditions)
The null boundary conditions chosen in the eikonal equation \eqref{eikonal} can be replaced with Dirichlet data $\distC_0 : \partial \Omega \to \R$, of $1$-Lipschitz regularity \cite{L82} with respect to the Riemannian distance $\distC(\cdot, \cdot)$, see \eqref{def:Length}. 
In that event, Algorithm \ref{algo:FastMarching} requires to extend the boundary data $\distC_0$ to a ghost layer, containing all grid points $y \in Z \sm \Omega$ such that the reverse stencil $V[y]$ is non-empty. The FM-LBR uses large stencils, of euclidean radius $\cO(h \kappa(\cM))$ on a grid \eqref{def:Grid} of scale $h$, which complicates this extension in contrast with e.g.\ the AGSI \cite{BR06} of stencil radius $\cO(h)$. 

Outflow boundary conditions, on a portion $\Gamma \subsetneq \partial \Omega$ of the domain's boundary, are natural in applications, see \S \ref{sec:num}. They are implemented by excluding in the definition \eqref{def:HopfLax} of $\Lambda(\dist, x)$, faces of $\partial V(x)$ containing an exterior vertex $y \in Z \sm \Omega$ close to $\Gamma$. If $x$ lies in a corner of $\Omega$, and if the stencil $V(x)$ is strongly anisotropic, then it may happen that all the vertices of $\partial V(x)$ lie \emph{outside} $\Omega$, so that the solution of \eqref{discreteSys} satisfies $\dist(x) = +\infty$. 
(In our experiments \S \ref{sec:num}, this happened in the square domain's corners when test case 1 was rotated by an angle $\theta \in [0.56,0.61]$ radians. These four infinite values were rejected when estimating numerical errors.) 
Despite these minor inconveniences, the FM-LBR behaves remarkably well in our numerical experiments, \S \ref{sec:num} and Appendix \ref{subsec:Accuracy}, with these more general boundary conditions.

\item (Dimension)
The FM-LBR causal stencils construction, see Proposition \ref{prop:Stencils}, is limited to domains of dimension $2$ and $3$, because some matrices $M \in S_4^+$ do not admit any $M$-obtuse superbase \cite{NS09}. 
An alternative construction of $M$-acute meshes of uniformly bounded cardinality is proposed in \cite{M12b}, for all $M \in S_4^+$; this cardinality is not small unfortunately, with $768$ simplices. 

The FM-LBR however extends in a straightforward manner to Riemannian metrics $\cM$ having a block diagonal structure, with blocks of size $1$, $2$ or $3$, using the following construction.
For $i \in \{1,2\}$ let $m_i$ be a positive integer, let $M_i \in S_{m_i}^+$, and let $\cT_i$ be an $M_i$-acute mesh. Let $m := m_1+m_2$ and let $M\in S_m^+$ be the matrix of diagonal blocks $M_1,M_2$. An $M$-acute mesh $\cT$ is obtained by collecting the $m$-dimensional simplices of vertices $(0,0)$, $(u_1,0), \cdots, (u_{m_1},0)$, $(0,v_1), \cdots, (0,v_{m_2})$, where the simplices of vertices $0, u_1, \cdots, u_{m_1}$ and $0,v_1, \cdots, v_{m_2}$ belong to $\cT_1$ and $\cT_2$ respectively.

Block diagonal metrics are not uncommon in the context of medical imaging \cite{BC10}, as they inherit the cartesian product structure of the fast marching domain: $\Omega = \Omega_0 \times \Omega_1$, where $\Omega_0$ is a physical domain of dimension $\leq 3$, and $\Omega_1$ is an abstract parameter domain of dimension $\leq 2$.
\end{itemize}

\begin{figure}
\begin{center}
\begin{tabular}{cc}
{\raise \iftoggle{siam}{0.2cm}{0.5cm} \hbox{
\includegraphics[width=2cm]{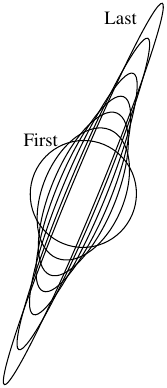}
}}
&
\includegraphics[width= \iftoggle{siam}{7cm}{8cm}]{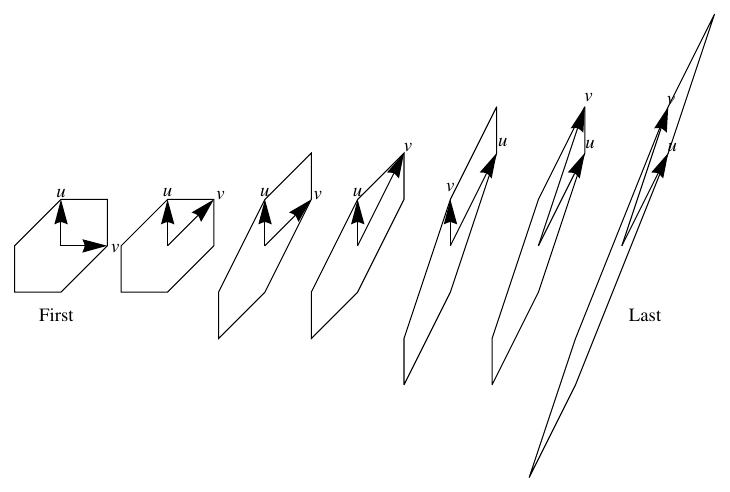}
\end{tabular}
\end{center}
\iftoggle{siam}{}{\vspace{-0.8cm}}
\caption{
\label{fig:BasisScale}
The unit sphere $\{u\in \R^2; \, \|u\|_M=1\}$, an $M$-reduced basis $(u,v)$, and the boundary of the FM-LBR mesh $\cT(M)$, for some $M\in S_2^+$ of anisotropy ratio $\kappa(M)$ ranging from $1$ to $15$, and eigenvector $(\cos(3\pi/8),\sin(3 \pi/8))$ associated to the small eigenvalue.
}
\end{figure}

\paragraph{Outline.}
Further insight on the FM-LBR is given in \S \ref{sec:reduced}, including the proof of Propositions \ref{prop:Admissibility} and \ref{prop:Convergence}. 
Numerical experiments are presented in \S \ref{sec:num}. In addition, Appendix \ref{sec:Paths} describes a robust minimal path extraction method for the FM-LBR and other Dijkstra inspired solvers of the eikonal equation. A heuristic analysis of the FM-LBR accuracy, and a last numerical experiment, appear in Appendix \ref{subsec:Accuracy}.

\begin{remark}[Memory requirements]
\label{rem:memory}
The memory requirements of numerical methods for the eikonal equation, such as the AGSI, the OUM and the FM-LBR, are dominated by (I) storing  the discrete solution $\dist$ and the Riemannian metric $\cM$, sampled on the discrete domain $\Omega \cap Z$, and (II) storing the graph structure underlying the numerical scheme. Point (I) requires two tables of $N$ and  $N \dim (\dim+1)/2$ reals, which may be represented in 32 bit (single precision) or 64 bit (double precision) format. The storage cost for the metric can be avoided if it has an analytical expression.

Point (II) can be avoided for the AGSI and the OUM when these methods are executed on a mesh with a trivial periodic structure, which is the case in our experiments. 
For the FM-LBR, Point (II) amounts to storing the non-empty reverse stencils $V[y]$, at all points of $Y := \{y \in Z; \, V[y]\neq \emptyset\}$, since the direct ones can be recomputed individually on demand for a minor cost. The set $Y$ the union of $\Omega \cap Z$ and of a thin boundary layer (in our experiments, $Y=\Omega \cap Z$ due to the use of outflow boundary conditions). 
The chosen data structure uses two tables: one of vectors (the differences $x-y$, for $x \in V[y]$, $y \in Y$, enumerated consecutively), and one of $\#(Y) \approx N$ integers (the start, for each $y \in Y$, of the description of $V[y]$ in the previous table). We represent integers in 32bit format, and vector components in 8bit format, since these are small integers by construction. 


Summing up, we find that the memory requirements of the FM-LBR are larger than those of the AGSI or the OUM (on a grid), by a factor ranging from $2$ (metric and solution stored in double precision), to $9$ (analytical metric, solution stored in single precision), through $3$ (metric and solution stored in single precision), in two dimensions. Respectively, in three dimensions, from $2.6$ to $24$, through $4.3$. 
\end{remark}

\section{Analysis of the FM-LBR}
\label{sec:reduced}

We introduce in \S \ref{sec:basis} the concepts of Lattice Basis Reduction. They are used in \S \ref{sec:MeshProperties} to estimate the construction cost of the FM-LBR meshes $\cT(M)$, and to prove their admissibility in the sense of Definition \ref{def:Admissibility}, as announced in Proposition \ref{prop:Admissibility}. We finally prove in \S \ref{sec:Convergence} the announced convergence result Proposition \ref{prop:Convergence}.

\subsection{Introduction to Lattice Basis Reduction}
\label{sec:basis}

We briefly introduce the framework of (low dimensional) Lattice Basis Reduction, used in the next subsection to construct and demonstrate the properties of $M$-acute meshes. 
See \cite{NS09} and references therein for more details on this rich theory, from which we use only one result: Theorem \ref{th:Red} stated below.
We denote by $b_1 \Z +\cdots + b_k \Z$ the sub-lattice of $\Z^\dim$ generated by $b_1, \cdots, b_k \in \Z^\dim$:  
$$
b_1 \Z +\cdots + b_k \Z := \{b_1 z_1 + \cdots + b_k z_k ; \ z_1, \cdots, z_k \in \Z\}.
$$
If $k=0$ then the above sum equals $\{0\}$ by convention. 
\begin{definition}
\label{def:RedBasis}
Let $1 \leq m \leq 4$ and let $M \in S_m^+$. 
A basis $(b_1, \cdots , b_\dim)$ of $\Z^\dim$ is said $M$-reduced iff for all $1 \leq k \leq \dim$:
\be
\label{def:uk}
b_k \in \argmin\{ \|z\|_M; \, z\in \Z^\dim \sm (b_1 \Z+ \cdots +b_{k-1} \Z)\}.
\ee
\end{definition}

If $M\in S_\dim^+$ is a diagonal matrix of coefficients $(\lambda_1, \cdots, \lambda_\dim)$, and if $0<\lambda_{\sigma(1)} \leq \cdots \leq \lambda_{\sigma(\dim)}$ for some permutation $\sigma$, then the permutation $(e_{\sigma(i)})_{i=1}^m$ of the canonical basis of $\Z^m$ is an $M$-reduced basis of $\Z^\dim$. 
See Figures \ref{figNeigh}, \ref{fig:BasisRotate}, \ref{fig:BasisScale}, for some examples of $M$-reduced bases associated to non-diagonal matrices $M$.
In dimension $\dim\geq 5$ there exists matrices $M\in S_\dim^+$ such that no basis of $\Z^\dim$ satisfies the relations \eqref{def:uk}, see \cite{NS09} (these relations state that $\|b_i\|_M$ equals the $i$-th Minkowski's minimum $\lambda_i(M)$). Minkowski's reduction \cite{NS09} is the natural generalization of Definition \ref{def:RedBasis} in dimension $\dim \geq 5$.

\begin{theorem}[Nguyen, Stelh\'e, 2009]
\label{th:Red}
There exists an algorithm which, given a matrix $M\in S_\dim^+$ as input, $1 \leq \dim \leq 4$, produces an $M$-reduced basis of $\Z^\dim$ and has the numerical cost $\cO(1+\ln \kappa(M))$. 
\end{theorem}

\begin{proof}
The proof is contained in \cite{NS09}, and we only point out here the precise reference within the paper and the slight differences in notations. 
The algorithm described in \cite{NS09} takes as input a basis $(b_1, \cdots, b_\dim)$ (here: the canonical basis of $\R^\dim$) of a lattice $L$ (here: $\Z^\dim$), and its Gram matrix with respect to some scalar product (here: the Gram matrix is $M$).  The algorithm outputs a greedy reduced basis of the lattice $L$, a notion which coincides with Minkowski's reduction if $\dim\leq 4$ (Lemma 4.3.2 in \cite{NS09}), which itself coincides with Definition \ref{def:RedBasis} if $\dim\leq 4$.

The main loop of the iterative algorithm is executed at most the following number of times (Theorem 6.0.5 in \cite{NS09}): 
$$
\cO\left( 1 + \ln \max_{1 \leq i \leq \dim} \|b_i\|_M - \ln \min_{u\in L} \|u\|_M \right),
$$
hence $\cO(1+ \ln \|M\|^\frac 1 2-\ln \|M^{-1}\|^{-\frac 1 2}) = \cO(1+\ln \kappa(M))$ times in our setting.
The complexity of each of these iterations is dominated by a closest vector search, described in Theorem 5.0.4 in \cite{NS09}, which consists of the inversion of a $k\times k$ Gram matrix, where $1 \leq k \leq \dim-1$, and an exhaustive search among $\cO(1)$ candidate vectors. In terms of elementary operations ($+,-,\times, /$) among reals, each iteration of this algorithm thus has cost $\cO(1)$, and the overall cost is the number of iterations $\cO(1+\ln \kappa(M))$.

Note that an important part of the discussion in \cite{NS09} is devoted to the special case where the vectors $(b_1, \cdots, b_\dim)$ have \emph{large integer} coefficients, the Gram matrix is computed with respect to the standard euclidean scalar product, and the complexity of an elementary operation ($+,-,\times, /$) among integers is not $\cO(1)$ but depends on the size of these integers. This more subtle notion of complexity, named bit complexity, is not relevant in our setting.
\end{proof}

The two dimensional version of the algorithms mentioned in Theorem \ref{th:Red} dates back to Lagrange \cite{L1773}, and mimicks the search for the greatest common divisor of two integers. 
This algorithm uses only a pair $(u,v)$ of (mutable) variables in $\Z^2$, initialized as the canonical basis of $\R^2$. The pair $(u,v)$ becomes an $M$-reduced basis at the end of the following loop, which takes at most $\cO(\ln \kappa(M))$ iterations. $\mathrm{Round}$ denotes rounding to a closest integer.
\begin{center}
\begin{tabular}{l}
\bf Do $(u,v) \, \assign \, (v, \ u - \mathrm{Round}(\<u, M v\>/\|v\|_M^2 ) \,v)$,\\
\bf While $\|u\|_M > \|v\|_M$.
\end{tabular}
\end{center}

We end this subsection with a basic estimate of the norms and scalar products of the elements of $M$-reduced bases.
\begin{proposition}
\label{prop:normRed}
Let $1 \leq \dim \leq 4$, let $M\in S_\dim^+$ and let $(b_1, \cdots, b_\dim)$ be an $M$-reduced basis of $\Z^\dim$.
Then for any $1 \leq i \leq \dim$
\begin{equation}
\label{biNorm}
\|b_i\| \leq \kappa(M), \quad \text{ and } \quad \|b_i\|_M \leq \kappa(M)\|b_1\|_M.
\end{equation}
For any integer combination $z \in b_1 \Z + \cdots + b_{i-1} \Z + b_{i+1} \Z + \cdots + b_m \Z$, of the basis elements distinct from $b_i$, one has 
\be
\label{scalNormIneq}
2 |\<b_i, M z\>| \leq \|z\|_M^2.
\ee
\end{proposition}

\begin{proof}
Proof of \ref{biNorm}.
Let $(e_j)_{j=1}^m$ denote the canonical basis of $\R^m$. By a dimensionality argument there exists $1 \leq j \leq \dim$ such that $e_j \notin b_1\Z+\cdots + \cdots b_{i-1} \Z$. By construction \eqref{def:uk} we obtain $\|b_i\|_M \leq \|e_j\|_M \leq \|M\|^\frac 1 2$ since $\|e_j\|=1$, hence $\|b_i\| \leq \kappa(M)$ as announced in \eqref{biNorm}. Observing that  $\|b_1\|_M \geq \|M^{-1}\|^{-\frac 1 2}$ since $\|b_1\| \geq 1$, and recalling that $\|b_i\|_M \leq \|M\|^\frac 1 2$, we obtain the second announced estimate. 

Proof of \eqref{scalNormIneq}. Remark that $b_i+z\notin b_1 \Z+ \cdots+b_{i-1} \Z$, since otherwise the basis element $b_i$ would be a linear combination of $b_1, \cdots, b_{i-1}, b_{i+1}, \cdots, b_\dim$.
Definition \ref{def:RedBasis} thus implies:
$$
\|b_i\|^2_M \leq \|b_i+z\|^2_M = \|b_i\|_M^2+2 \<b_i, M z\>+\|z\|_M^2,
$$
hence $-2 \<b_i, M z\> \leq \|z\|_M^2$.
Likewise $2 \<b_i, M z\> \leq \|z\|_M^2$, which concludes the proof. 
\end{proof}


\subsection{Properties of $M$-acute meshes}
\label{sec:MeshProperties}
\paragraph{The FM-LBR stencil construction has numerical cost $\cO(1+ \ln \kappa(M))$.\\}
The construction of an FM-LBR mesh $\cT(M)$, $M \in S_m^+$, has unit cost (for fixed $m$) given an $M$-obtuse superbase, see Proposition \ref{prop:Stencils}. We give below a unit cost construction of an $M$-obtuse superbase given an $M$-reduced basis, in dimension $m \in \{2,3\}$, which by Theorem \ref{th:Red} can itself be obtained at cost $\cO(1+\ln \kappa(M))$.

Dimension 2. Let $(b_1,b_2)$ be an $M$-reduced basis. Up to replacing $b_2$ with its opposite, we may assume that  $\<b_1, M b_2 \> \leq 0$. Using \eqref{scalNormIneq} we obtain $2|\<b_1, M b_2\>| \leq \|b_1\|_M^2$, hence $\<M b_1,-b_1-b_2\> \leq 0$ and likewise $\<M b_2,-b_1-b_2\> \leq 0$, so that $(b_1,b_2, -b_1-b_2)$ is an $M$-obtuse superbase. 

Dimension $3$: we reproduce without proof the construction of \cite{FM13}. Let $(b_1,b_2,b_3)$ be the elements of an $M$-reduced basis, permuted and signed so that $|\<b_1, M b_2\>| \leq \min \{ - \<b_1, M b_3\>, \allowbreak -\<b_2, M b_3\>\}$. An $M$-obtuse basis is given by $(b_1,b_2,b_3,-b_1-b_2-b_3)$ if $\<b_1, M b_2\> \leq 0$, and $(-b_1,b_2,b_1+b_3,-b_2-b_3)$ otherwise.


\paragraph{Radius of the FM-LBR stencils.\\}

Assume that an FM-LBR acute mesh $\cT(M)$ is built from an $M$-obtuse superbase obtained as in the previous paragraph\footnote{This implementation is used in our numerical experiments. See Corollary \ref{corol:VertexNorm} for a proof with no assumption.} 
from an $M$-reduced basis $(b_i)_{i=1}^m$. Then one easily checks that its vertices have the form $e = \sum_{i=1}^m \ve_i b_i$, where $\ve_i \in \{-1,0,1\}$ and $(b_i)_{i=1}^m$ is the used $M$-reduced basis. Hence the vertices norm  $\|e\| \leq \sum_{i=1}^m \|b_i\| \leq m \kappa(M)$ obeys the announced bound.

An $M$-reduced basis of $\Z^\dim$ contains by definition \emph{small} vectors with respect to the norm $\|\cdot\|_M$: the smallest linearly independent ones with integer coordinates. As a result, the FM-LBR meshes $\cT(M)$ have a small radius with respect to the norm $\|\cdot \|_M$. In constrast, these meshes can be large from the euclidean perspective.
This has consequences on the FM-LBR accuracy, see Appendix \ref{subsec:Accuracy}.

\paragraph{Stability of $M$-acute meshes.\\}

Consider the following distance on the set $S_\dim^+$ of symmetric positive definite matrices: for all $M,N\in S_\dim^+$ 
\begin{equation}
\label{def:dtimes}
\dtimes(M,N) := \sup_{u\neq 0} \left|\ln \|u\|_M - \ln \|u\|_N\right|.
\end{equation}
This distance allows to compare the norms of vectors multiplicatively, in contrast with the classical operator norm which is tailored for additive comparisons. Indeed denoting $\alpha := \dtimes(M,N)$ and $\beta := \|M^\frac 1 2 - N^\frac 1 2\|$, one has for all $u\in \R^2$ such that $\|u\| = 1$
$$
e^{-\alpha} \leq \|u\|_M/\|u\|_N \leq e^{\alpha}, \stext{ and } -\beta \leq \|u\|_M - \|u\|_N \leq \beta.
$$

The next lemma establishes a lower bound on the $\|\cdot\|_M$ norm of points with integer coordinates outside of an $N$-reduced mesh, when the matrices $M,N\in S_\dim^+$ are close enough. 

\begin{lemma}
\label{lem:NormOutT}
Let $M,N\in S_\dim^+$, with $m\leq 4$. Let $u_1, \cdots, u_\dim$ be an arbitrary $M$-reduced basis of $\Z^\dim$, and let $\cT$ be an $N$-reduced mesh.  
Consider a point $z\in \Z^\dim$ which is not a vertex of $\cT$.
Then there exists $1 \leq l \leq \dim$ such that
$$
z \in u_1 \Z+ \cdots + u_l\Z \ \text{ and } \ \|z\|_M^2 \,e^{4 \dtimes(M,N)} \geq \|u_l\|_M^2 + \|u_1\|_M^2.
$$
\end{lemma}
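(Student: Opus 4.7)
The plan is to define $l\in\{1,\ldots,d\}$ as the smallest integer such that $z$ belongs to the sublattice $u_1\Z+\cdots+u_l\Z$; this index is well-defined since $(u_1,\ldots,u_d)$ is a basis of $\Z^d$ and $z\neq 0$ (indeed $0$ is a vertex of every simplex of $\cT$, so the non-vertex hypothesis forces $z\neq 0$). Applying the minimality relation \eqref{defuk} defining a $M$-reduced basis to the element $z\in\Z^d\setminus(u_1\Z+\cdots+u_{l-1}\Z)$ then immediately yields the baseline estimate $\|z\|_M\ge\|u_l\|_M$.

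The substantial work is to promote this baseline into the claimed additive improvement by an extra $\|u_1\|_M^2$, at the price of the multiplicative loss $e^{4d_\times(M,N)}$. The first ingredient is combinatorial: each $T\in\cT$ is an integer simplex of volume $1/d!$, hence a primitive lattice simplex whose only integer points are its $d+1$ vertices. Since $z$ is integer and not a vertex of $\cT$, it therefore cannot belong to any simplex $T\in\cT$, so that $z$ lies outside the union $\bigcup_{T\in\cT}T$. By property (a) of Definition~\ref{defMesh} this union is a neighborhood of the origin, so this "outside-the-mesh" property has genuine geometric content.

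Converting this into a quantitative lower bound would use the explicit construction of $N$-reduced meshes to be carried out in \S\ref{sec:mesh}, together with the acuteness condition (c) of Definition~\ref{defMesh}. The target is an estimate of the form $\|z\|_N^2\ge \|\tilde u_l\|_N^2+\|\tilde u_1\|_N^2$, where $\tilde u_1,\ldots,\tilde u_d$ is a $N$-reduced basis of $\Z^d$ appearing among the vertices of $\cT$, so that crossing a facet of some simplex opposite to the origin (which the segment $[0,z]$ must do, since $0\in\cT$ but $z\notin\cT$) forces $z$ to overshoot a short vector of the mesh by at least another short direction. Once this is in hand, the passage from the $\|\cdot\|_N$ inequality to the stated $\|\cdot\|_M$ inequality is routine: the multiplicative comparison $e^{-d_\times(M,N)}\|v\|_N\le\|v\|_M\le e^{d_\times(M,N)}\|v\|_N$, applied once on the left to $z$ and once on the right to each of $u_l,u_1$ (noting that $\|\tilde u_i\|_N\ge e^{-d_\times(M,N)}\|u_i\|_M$ from \eqref{defuk}), contributes a factor of $e^{2d_\times(M,N)}$ to each squared norm, combining to the global prefactor $e^{4d_\times(M,N)}$.

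The hardest step will be the geometric one, turning the soft combinatorial fact that $z$ lies outside $\bigcup_{T\in\cT}T$ into the quantitative separation $\|z\|_N^2\ge\|\tilde u_l\|_N^2+\|\tilde u_1\|_N^2$. This requires the full explicit structure of $N$-reduced meshes developed in \S\ref{sec:mesh}, particularly that the acuteness condition (c) guarantees that the facets of the mesh opposite to the origin act as obstructions with quadratic "thickness" measured by the $N$-norm of short mesh vectors; the bookkeeping matching the index of the crossed facet with the index $l$ selected in the first step is where the argument becomes delicate.
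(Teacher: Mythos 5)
Your proposal correctly pins down the easy half of the statement: the existence of an index $l$ with $z\in u_1\Z+\cdots+u_l\Z$ and the baseline bound $\|z\|_M\geq\|u_l\|_M$ from \iref{defuk}, together with the (true) observation that a unimodular lattice simplex contains no integer points besides its vertices. But the entire content of the lemma is the additive improvement by $\|u_1\|_M^2$, and on this point you do not give a proof: you describe a ``target'' estimate, defer it to ``the full explicit structure of $N$-reduced meshes developed in \S\ref{sec:mesh}'', and acknowledge that the facet-crossing argument you have in mind ``becomes delicate''. That is a genuine gap, not a routine verification. It is also pointed in the wrong direction: the paper's proof never invokes the explicit constructions of Propositions \ref{propNeigh2} and \ref{propNeigh3}, only the three axioms of Definition \ref{defMesh}, and it is algebraic rather than geometric. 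Since $\bigcup_{T\in\cT}T$ is a neighborhood of $0$, some $\lambda z$ with $\lambda>0$ lies in a simplex $T\in\cT$, so $z=\alpha_1v_1+\cdots+\alpha_dv_d$ with $\alpha_i\geq 0$ and $v_i$ the non-zero vertices of $T$; property (b) ($|\det(v_1,\ldots,v_d)|=1$) forces the $\alpha_i$ to be \emph{integers}; property (c) (acuteness, $v_i^\trans Nv_j\geq 0$) lets one drop the cross terms in the expansion of $\|z\|_N^2$ to get $\|z\|_N^2\geq\sum_i\alpha_i^2\|v_i\|_N^2$; the Minkowski minimality \iref{defuk} bounds each $\|v_i\|_M$ below by $\|u_l\|_M$ or $\|u_1\|_M$ (with $l$ chosen as the smallest index such that all the $v_i$ with $\alpha_i>0$ lie in $u_1\Z+\cdots+u_l\Z$); and the hypothesis that $z$ is not a vertex gives $\sum_i\alpha_i^2\geq 2$, which is exactly what produces the extra $\|u_1\|_M^2$. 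The two conversions between $\|\cdot\|_N$ and $\|\cdot\|_M$ account for the factor $e^{4d_\times(M,N)}$. None of this appears in your write-up, so the proposal cannot be accepted as a proof; the missing step is the heart of the lemma, and the route you sketch (quantifying how far $z$ overshoots a facet) is both unexecuted and unnecessarily hard compared with the integrality-plus-acuteness expansion above.
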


\begin{proof}
Since $\cT$ covers a neighborhood of the origin, there exists a simplex $T\in \cT$ and a real $\lambda >0$ such that $\lambda z \in T$.
Denoting by $v_1, \cdots, v_\dim$ the non-zero vertices of $T$, there exists therefore non-negative reals $\alpha_1, \cdots, \alpha_\dim\in \R_+$ such that $z = \alpha_1 v_1 + \cdots + \alpha_\dim v_\dim$. Since $(v_1, \cdots, v_\dim)$ form a basis of $\Z^m$, these coefficients are integers.

Up to reordering the vertices $v_1, \cdots, v_\dim$, we may assume that $\alpha_1, \cdots, \alpha_k$ are positive, and that $\alpha_{k+1},\cdots, \alpha_\dim$ are zero, for some $1 \leq k \leq \dim$.  
Let $l$ be the smallest integer such that $z\in u_1 \Z+ \cdots +u_l \Z$. 
There exists $1 \leq i \leq k$ such that $v_i \notin u_1 \Z+ \cdots +u_{l-1} \Z$, hence $\|v_i\|_M\geq \|u_l\|_M$; other vertices satisfy $\|v_j\|_M \geq \|u_1\|_M$, $1\leq j \leq k$, since they are non-zero and have integer coordinates.
Therefore, denoting $\delta := \dtimes(M,N)$
\begin{align*}
e^{4 \delta} \|z\|_M^2 &\geq e^{2 \delta} \|z\|_N^2
 = e^{2 \delta}\left(\sum_{1 \leq i \leq k} \alpha_i^2 \|v_i\|_N^2 + 2\sum_{1 \leq i<j\leq k} \alpha_i\alpha_j \<v_i, N v_j\>\right)\\
&\geq   e^{2 \delta}\sum_{1 \leq i \leq k} \alpha_i^2 \|v_i\|_N^2
\geq   \sum_{1 \leq i \leq k} \alpha_i^2 \|v_i\|_M^2
\geq \|u_l\|_M^2 + \left( \sum_{1 \leq i \leq k} \alpha_i^2 -1\right)\|u_1\|_M^2.
\end{align*}
Observing that $\alpha_1^2+ \cdots+\alpha_k^2 \geq 2$, since $z$ is not a vertex of $T$, we
conclude the proof.
\end{proof}

We prove in the next corollary that the vertices of an $M$-acute mesh contain the elements of an $N$-reduced basis, and their opposites, when the matrices $M,N$ are sufficiently close. This result, and the compactness of $\overline \Omega$, immediately implies the second point of Proposition \ref{prop:Admissibility}.

\begin{corollary}
\label{corol:BasisVertex}
Let $M,N\in S_\dim^+$, with $m \leq 4$, be such that 
\be
\label{dtimeskappa}
\dtimes(M,N) < \ln(1+\kappa(M)^{-2})/4.
\ee
Let $(b_1, \cdots, b_\dim)$ be an $M$-reduced basis of $\Z^\dim$, and let $\cT$ be an $N$-reduced mesh. Then $b_1, \cdots, b_\dim$ and $-b_1, \cdots, -b_\dim$
are vertices of $\cT$.
\end{corollary}

\begin{proof}
Let $1 \leq l \leq m$. We have  $b_l \notin b_1 \Z + \cdots + b_{l-1} \Z$, since $(b_i)_{i=1}^m$ is a basis, and Proposition \ref{prop:normRed} implies
$$
\|b_l\|^2_M e^{4 \dtimes(M,N)} < \|b_l\|_M^2+ \kappa(M)^{-2} \|b_l\|_M^2 \leq  \|b_l\|_M^2+\|b_1\|_M^2.
$$
Hence $b_l$ is a vertex of $\cT$, and likewise $-b_l$, by Lemma \ref{lem:NormOutT}.
\end{proof}

We finally estimate the radius of the FM-LBR meshes $\cT(M)$, see Proposition \ref{prop:Stencils}, in terms of the condition number of the matrix $M \in S_m^+$. This concludes the proof of Proposition \ref{prop:Admissibility}. 

\begin{corollary}
\label{corol:VertexNorm}
Let $M \in S_m^+$, with $m \in \{2,3\}$. Then any vertex $e$ of $\cT(M)$ satisfies $\|e\| \leq C_m \kappa(M)$, with $C_2:=2$ and $C_3:=4$.
\end{corollary}

\begin{proof}
Given $m$ linearly independent vertices $(b_i)_{i=1}^m$ of $\cT(M)$, one can express any other vertex under the form $e=\sum_{i=1}^m \alpha_i b_i$. A simple check by exhaustive enumeration shows that $\sum_{i=1}^m |\alpha_i| \leq C_m$ (these coefficients are independent of $M$), hence $\|e\| \leq C_m \max \{\|b_i\|; \, 1 \leq i \leq m\}$. 
Applying Corollary \ref{corol:BasisVertex} with $M = N$, we find that the vertices of $\cT(M)$ contain an $M$-reduced basis $(b_i)_{i=1}^m$, which by Proposition \ref{prop:normRed} satisfies $\|b_i\| \leq \kappa(M)$ for all $1 \leq i \leq m$. This concludes the proof.
\end{proof}


\subsection{Convergence of the FM-LBR}
\label{sec:Convergence}
We prove in this section the uniform convergence of the discrete system \eqref{discreteSys} solutions towards the anisotropic eikonal PDE \eqref{eikonal} solution, under the assumptions of Proposition \ref{prop:Convergence} and with its notations. 
Following the steps of \cite{BR06}, we begin with a discrete Lipschitz regularity estimate for the maps $\dist_h : Z_h \to \R$.

\begin{lemma}
\label{lem:Lipschitz}
There exists constants $h_0 > 0$ and $C_0 < \infty$ such that for all $0 < h \leq h_0$ and all $x, y \in Z_h$ one has 
\begin{equation}
\label{eq:LipschitzDiscrete}
|\dist_h(x) - \dist_h(y) | \leq C_0 \|x-y\|.
\end{equation}
\end{lemma}

\begin{proof}

We prove below that $|\dist_h(x) - \dist_h(y) | \leq C_1 h$ when $\|x-y\| = h$, in other words when $x$ and $y$ are neighbors on the grid $Z_h := h \Z^m$. This immediately implies $|\dist_h(x) - \dist_h(y)| \leq C_1 \|x-y\|_1$, where $\|(\lambda_1, \cdots, \lambda_m) \|_1 := \sum_{i=1}^m |\lambda_i|$, hence also \eqref{eq:LipschitzDiscrete} with $C_0 := C_1 \sqrt m$.

If $x\notin \Omega$ and $y \notin \Omega$, then $\dist_h(x) = \dist_h(y)=0$, and the result is proved. Up to exchanging $x$ and $y$, we may therefore assume that $x \in \Omega$. Let $B := \cB(x)$ be the basis corresponding to the property (Stability) of the admissible family of meshes $(\cT(x))_{x \in \Omega}$, see Definition \ref{def:Admissibility}. 
Each element $e$ of $B$ is a vertex of $\cT(x)$, hence satisfies $\|e\| \leq R$, see Definition \ref{def:Admissibility}  (Boundedness).
We abusively regard $B$ as an $m\times m$ matrix which columns are the basis elements, and observe that $\|B\| \leq R \sqrt m$ and $|\det(B)| = 1$. Therefore $\|B^{-1}\| \leq \|B\|^{m-1}/|\det B| \leq C_2 := (R \sqrt m)^{m-1}$. 

Since we assumed $\|x-y\|=h$, we have $y=x+ \ve h e_j$, for some $\ve \in \{-1,1\}$, $1 \leq j \leq m$, and where $(e_j)_{j=1}^m$ denotes the canonical basis of $\R^m$. Hence denoting by $(b_i)_{i=1}^m$ the elements of the basis $B$, and by $(\alpha_{ij})_{i,j=1}^m$ the coefficients of $B^{-1}$, we obtain $y = x + \ve h\sum_{i=1}^m \alpha_{ij} b_i$. Let $s := \sum_{i=1}^m |\alpha_{ij}|$, and let $(x_k)_{k=0}^s$ be a finite sequence of points of $Z_h$ such that $x_0:=x$, $x_s:=y$, and $x_{k+1}-x_k \in \{\pm h b_i; \, 1 \leq i \leq m\}$ for all $0 \leq k < s$.

By Cauchy-Schwartz's inequality one has $s/ \sqrt m \leq (\sum_{i=1}^m \alpha^2_{ij})^\frac 1 2 \leq \|B^{-1}\| \leq C_2$, hence $\|x_k - x\| \leq h R C_2  \sqrt m$ for any $0 \leq k \leq s$. We choose the upper grid scale bound $h_0$ so that this constant smaller than the radius $r$ involved in property (Stability) of Definition \ref{def:Admissibility}. If $x_k \in \Omega$, then by property (Stability) a grid point $x_{l}$, with $|k-l| \leq 1$ and  $0\leq l \leq s$, is a vertex of the stencil $V_h(x)$. Hence inserting $x_l$ in \eqref{def:HopfLax} we obtain $\dist_h(x_k) = \Lambda_h(\dist_h, x_k) \leq \|x_k - x_l\|_{\cM(x)} + \dist_h(x_l) \leq h R C_3+ \dist_h(x_l)$, with $C_3 := \max \{\|\cM(z)\|^\frac 1 2; \, z \in \overline \Omega\}$. If $x_k \notin \Omega$ then $\dist_h(x_k) = 0$, hence obviously $\dist_h(x_k) \leq \dist_h(x_l)$. Exchanging the roles of of $k$ and $l$ we obtain $|\dist_h(x_k) - \dist_h(x_l)| \leq h R C_3$, hence by the triangular inequality $|\dist_h(x) - \dist_h(y) | \leq h R C_3 s \leq h R C_3 C_2 \sqrt m$, which concludes the proof.
\end{proof}

The rest of the proof is only sketched, since it amounts to a minor adaptation of Theorem 11 in \cite{BR06}. The only difference lies in the presence, in \cite{BR06}, of a canonical interpolation operator on $\Omega$ (piecewise linear interpolation on a prescribed mesh).
Denote by $\distC_h$ the bilinear interpolant%
\footnote{%
Other interpolation schemes could be used, such as piecewise linear interpolation on a trivial periodic mesh, provided one can control the Lipschitz regularity constant and the support of the interpolated function.
} 
of $\dist_h$ on the grid $Z_h$, and observe that $|\distC_h(x) - \distC_h(y)| \leq K C_0 \|x-y\|$, for all $x, y \in \R^2$ and all $0 < h \leq h_0$, where $K$ is an absolute constant depending only on the interpolation scheme, and $C_0$, $h_0$, are the constants of Lemma \ref{lem:Lipschitz}. 
Note also that $\supp(\distC_h) \subset \{z+e; \, z \in \overline \Omega, \, \|e\| \leq h \sqrt m\}$, hence by Lipschitz regularity $\distC_h$ is bounded uniformly independently of $h$, and therefore by Arzel\`a-Ascoli's theorem the family $(\distC_h)_{0 < h \leq h_0}$ is pre-compact. Considering an arbitrary converging sub-sequence $\distC_{h(n)}$, with $h(n) \to 0$ as $n \to \infty$, one observes that the limit is supported on $\overline \Omega$, and applying the arguments of Theorem 11 in \cite{BR06} that it is a viscosity solution of the eikonal PDE \eqref{eikonal}. Uniqueness of such a solution \cite{L82} implies the pointwise convergence $\distC_h(x) \to \distC(x)$, as $h \to 0$, for all $x \in \overline \Omega$. Finally the announced uniform convergence \eqref{eq:UnifConv} follows from the uniform $KC_0$-Lipschitz regularity of $\distC_h$, for all $0 < h \leq h_0$.

\section{Numerical experiments}
\label{sec:num}

%


We compare numerically the FM-LBR with two popular solvers (AGSI, FM-8) of the eikonal equation, which enjoy a reputation of simplicity and efficiency in applications, and with the recent and closely related MAOUM.
The Adaptive Gauss Seidel Iteration\footnote{%
As suggested in \cite{BR06}, the stopping criterion tolerance for the AGSI iterations is set to $10^{-8}$.
}
(AGSI) \cite{BR06} produces numerical approximations which are guaranteed to converge towards the solution of the continuous anisotropic eikonal equation as one refines the computation grid\footnote{%
The grid is triangulated with a trivial periodic mesh, for the AGSI and the MAOUM. As a result the AGSI uses a 6 point stencil.%
%
}%
, for an arbitrary continuous Riemannian metric $\cM$. Fast Marching using the 8 point stencil (FM-8, stencil illustrated on Figure \ref{fig:Classical}, center left) does not offer this convergence guarantee, but has a quasi-linear complexity $\cO(N \ln N)$, in contrast with the super-linear complexity $\cO(\mu(\cM) N^{1+\frac 1 \dim})$ of the AGSI. Fast Marching using Lattice Basis Reduction%
\footnote{%
The FM-LBR stencil $V(z)$, at each grid point $z \in \Omega \cap Z$, is built \eqref{eq:VFromT} from the $\cM(z)$-reduced mesh $\cT(\cM(z))$ of Proposition \ref{prop:Stencils}, except if the matrix $\cM(z)$ is detected to be exactly diagonal. In that case we use the standard $4$ vertices neighborhood in 2D (resp. $6$ vertices in 3D), which is an $\cM(z)$-reduced mesh, see Figure \ref{fig:Classical} (left and center right). This modification has little impact on accuracy or CPU time, but avoids to pointlessly break symmetry. %
}
(FM-LBR) aims to offer the best of both worlds: a convergence guarantee, and fast computation times%
\footnote{%
Note that the FM-LBR memory requirement is higher than that of the AGSI and FM-8, see Remark \ref{rem:memory}.
}.

We also implemented the Monotone Acceptance Ordered Upwind Method (MAOUM) \cite{AltonMitchell12}, a Dijkstra inspired method using static stencils, like the FM-LBR. The difference between these two methods is that the MAOUM stencil%
\footnote{%
The stencils for the MAOUM are here built using the {\rm ComputeUpdateSet} stencil construction routine described and used in all the numerical experiments of \cite{AltonMitchell12}. The paper \cite{AltonMitchell12} also outlines sufficient conditions (called $\delta$-NGA or DRB) for anisotropic stencils to be causal, but no explicit anisotropic stencil construction.%
}
$V(z)$ at a grid point $z\in \Omega \cap Z$, is isotropic, only depends on the anisotropy ratio $\kappa(\cM(z))$, and its boundary has cardinality $\cO(\kappa(\cM(z))^{\dim-1} )$; in contrast the FM-LBR stencil is anisotropic, aligned with the ellipse defined by $\cM(z)$, and of cardinality $\cO(1)$. 
The MAOUM stencils were precomputed and stored in a look-up table, resulting in a complexity $\cO(\kappa(\cM) N \ln N)$ for this algorithm in 2D. 


\begin{figure}
\begin{center}
\iftoggle{siam}{
\includegraphics[width=3cm]{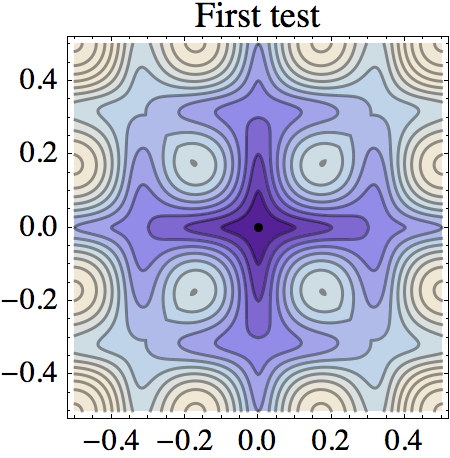}
\includegraphics[width=3cm]{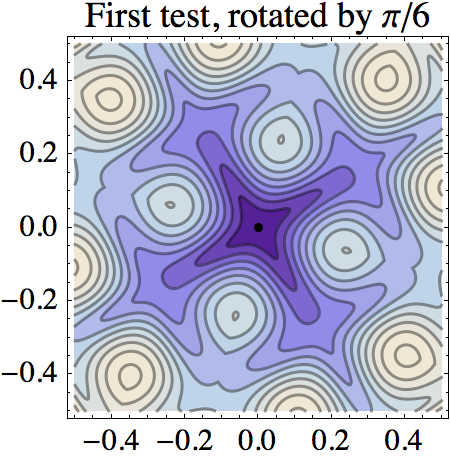}
\includegraphics[width=3cm]{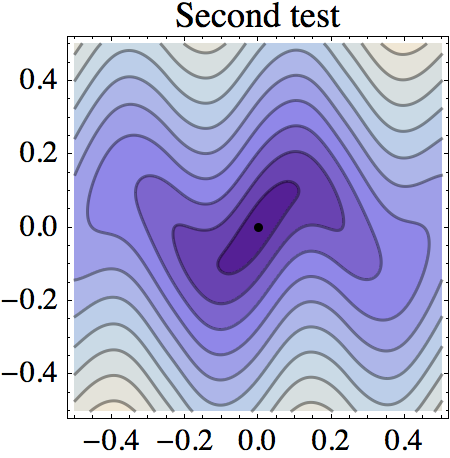}
\includegraphics[width=3cm]{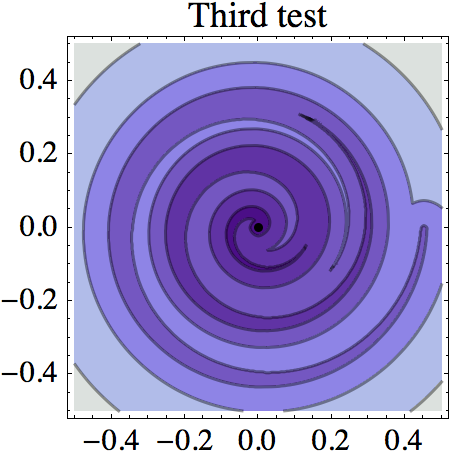}
}{
\includegraphics[width=3.8cm]{\pathPic FastMarchingIllus/VladBench/Vlad2_dist.png}
\includegraphics[width=3.8cm]{\pathPic FastMarchingIllus/VladBench/Vlad2Rot_dist.png}
\includegraphics[width=3.8cm]{\pathPic FastMarchingIllus/VladBench/Vlad_dist.png}
\includegraphics[width=3.8cm]{\pathPic FastMarchingIllus/VladBench/Snail2_contours.png}
}
\end{center}
\caption{
\label{fig:Vlad1}
Level lines of the solutions of the two dimensional test cases. 
}
\end{figure}

We consider application inspired test cases, which violate some of the simplifying assumptions used in our convergence analysis Proposition \ref{prop:Convergence}: they involve outflow boundary conditions, non-trivial Dirichlet boundary conditions in Appendix \ref{subsec:Accuracy}, and discontinuous Riemannian metrics in cases 3 and 4.
Their exact technical description is given in Remark \ref{rem:ExactDescription}.\\

The first test is a distance computation on a parametrized surface, considered in \cite{VladMSSP}. As shown on Figure \ref{fig:tables}, the FM-LBR is the fastest in terms of CPU time%
\footnote{%
All timings obtained on a 2.4Ghz Core 2 Duo, using a single core. Timings of the FM-LBR include the stencil construction, which typically accounts for $25\%$.
}, but is less accurate than the AGSI or the FM-8.
Rotating this test case by the angle $\theta = \pi/6$, and conducting the same experiment, shows a different story: the numerical errors of the AGSI and the FM-8 increase by a factor larger than $5$, while the FM-LBR, unaffected, is now the most accurate method, see Figure \ref{fig:tables}. 
The FM-LBR cuts $L^\infty$ and $L^1$ numerical errors by 40\% in comparison with the AGSI and the MAOUM, and CPU time by 85\%, while the FM-8 produces even larger errors.

Figure \ref{fig:tables} shows that the FM-LBR offers the best accuracy for more grid orientations $\theta$ than its alternatives. The maximal error and averaged error with respect to $\theta$ are also in favor of the FM-LBR.
The strong dependence of the AGSI and the FM-8 accuracy on the test case orientation is puzzling, and contrasts with the more consistent behavior of the MAOUM and the FM-LBR. 
The author's heuristic and personal interpretation of this phenomenon, which is open to debate and put into question by a reviewer, is that this test case is for $\theta=0$ dominated by (close to) axis-aligned anisotropy. The AGSI and the FM-8, which are based on small and fixed stencils, benefit from this special configuration; the FM-8 also works well for $\theta=\pi/4$, because its stencil includes the four diagonals.\\

\begin{figure}
\begin{center}
\iftoggle{siam}{
\begin{minipage}{9cm}
\begin{tabular}{c|cccc}
 &  FM-LBR& FM-8 & AGSI & MAOUM\\
 \hline
 & \multicolumn{4}{c}{First test}\\
CPU time & \tr{0.19} & \tr{0.19} & 1.01 & 1.28 \\ 
$L^\infty$ error & 3.99 & \tr{1.47} & 1.62 & 8.80\\
$L^1$ error & 1.13 & 0.53 & \tr{0.51} & 2.33\\
\hline
 & \multicolumn{4}{c}{First test, rotated by $\pi/6$}\\
CPU time & \tr{0.20} & 0.21 & 1.44 & 1.31 \\ 
$L^\infty$ error & \tr{5.52} & 12.5 & 9.45 & 8.56\\
$L^1$ error & \tr{1.46} & 3.42 & 2.51 & 2.52 \\
\hline
 & \multicolumn{4}{c}{Second test}\\
CPU time & \tr{0.076} & 0.079 & 0.77 & 0.36 \\ 
$L^\infty$ error & \tr{2.90} & 3.03 & 3.67 & 7.66\\
$L^1$ error & \tr{1.03} & 1.30 & 1.40 & 2.3\\
\end{tabular}
\end{minipage}
\hspace{-0.5cm}
\begin{minipage}{3cm}
\includegraphics[width=4cm]{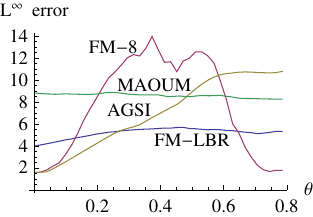}
\includegraphics[width=4cm]{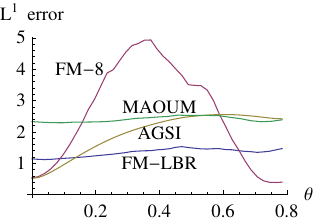}
\end{minipage}
}{
\begin{minipage}{9cm}
\begin{tabular}{c|cccc}
 &  FM-LBR& FM-8 & AGSI & MAOUM\\
 \hline
 & \multicolumn{4}{c}{First test}\\
CPU time & \tr{0.19} & \tr{0.19} & 1.01 & 1.28 \\ 
$L^\infty$ error & 3.99 & \tr{1.47} & 1.62 & 8.80\\
$L^1$ error & 1.13 & 0.53 & \tr{0.51} & 2.33\\
\hline
 & \multicolumn{4}{c}{First test, rotated by $\pi/6$}\\
CPU time & \tr{0.20} & 0.21 & 1.44 & 1.31 \\ 
$L^\infty$ error & \tr{5.52} & 12.5 & 9.45 & 8.56\\
$L^1$ error & \tr{1.46} & 3.42 & 2.51 & 2.52 \\
\hline
 & \multicolumn{4}{c}{Second test}\\
CPU time & \tr{0.076} & 0.079 & 0.77 & 0.36 \\ 
$L^\infty$ error & \tr{2.90} & 3.03 & 3.67 & 7.66\\
$L^1$ error & \tr{1.03} & 1.30 & 1.40 & 2.3\\
\end{tabular}
\end{minipage}
\begin{minipage}{4cm}
\includegraphics[width=5cm]{\pathPic FastMarchingIllus/VladBench/Vlad2LInf_MAOUM.pdf}
\includegraphics[width=5cm]{\pathPic FastMarchingIllus/VladBench/Vlad2L1_MAOUM.pdf}
\end{minipage}
}
\end{center}
\caption{
\label{fig:tables}
Tables of CPU time in seconds, $L^\infty$ error an averaged $L^1$ error (left). Accuracy, in the first test rotated by an angle $\theta\in [0, \pi/4]$ (this interval is enough, since the dependence in $\theta$ is $\pi/2$-periodic and even). In average over theta, CPU times are $0.21s$, $0.20s$, $1.37s$, $1.31s$, $L^\infty$ errors $5.16$, $7.64$, $6.86$, $8.57$ and averaged $L^1$ errors $1.34$, $2.58$, $1.95$, $2.40$ for the FM-LBR, FM-8, AGSI and MAOUM respectively. \emph{All errors are multiplied by $100$, for better readability.}}
\end{figure}

\begin{figure}
\begin{center}
\iftoggle{siam}{
\includegraphics[width=3cm]{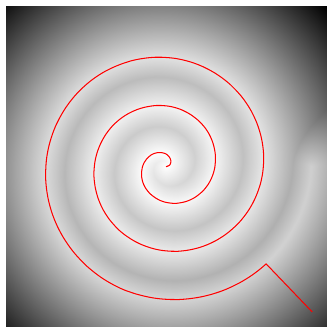}
\includegraphics[width=3cm]{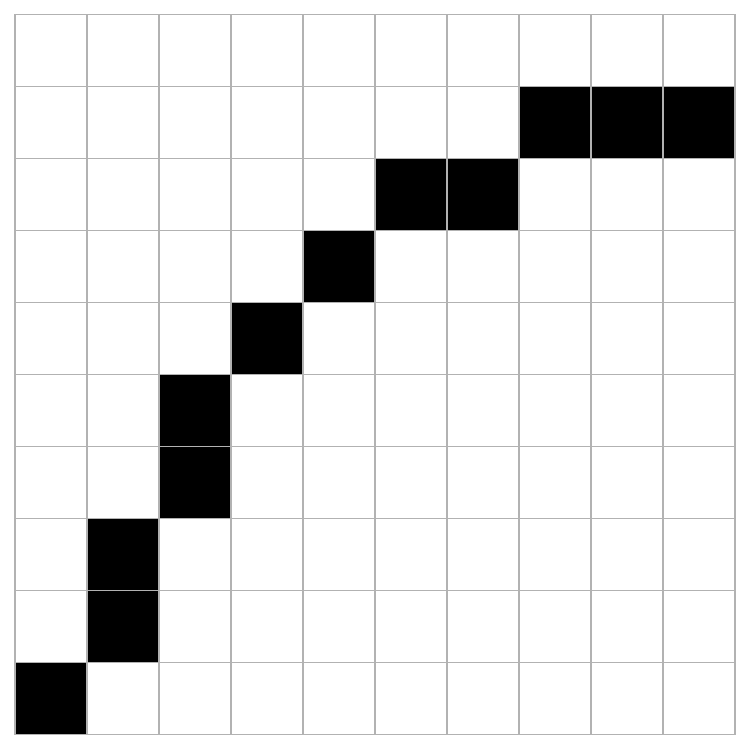}
\includegraphics[width=2.9cm]{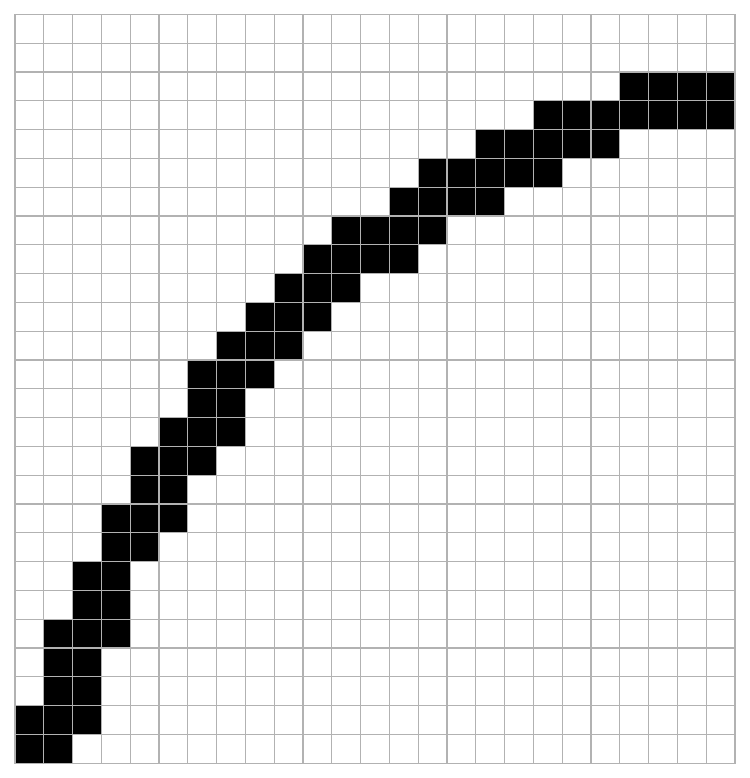}
\includegraphics[width=3cm]{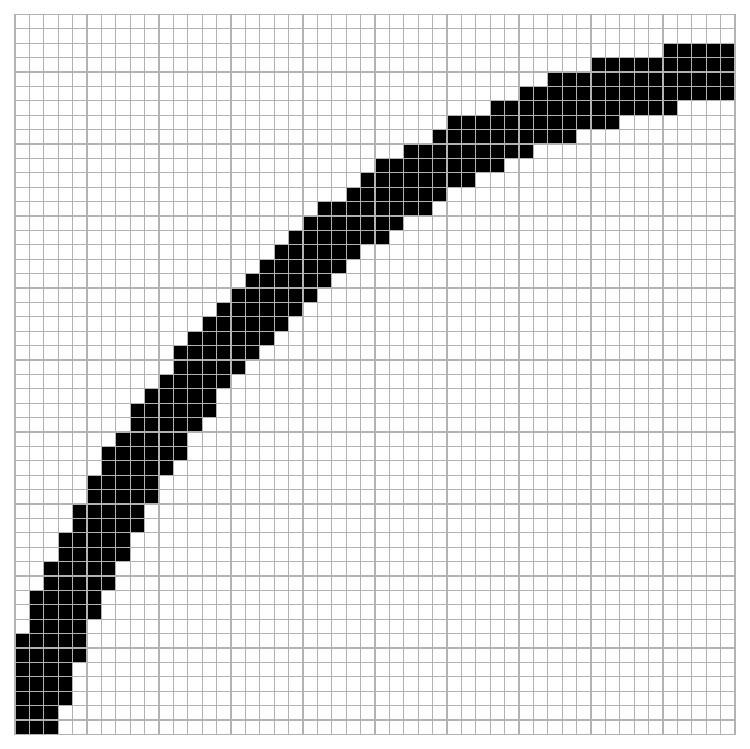}
}{
\includegraphics[width=3.8cm]{\pathPic FastMarchingIllus/VladBench/Snail2_New/snail2_unfolded_1000.pdf}
\includegraphics[width=3.8cm]{\pathPic FastMarchingIllus/VladBench/Snail2_New/Detail_201.pdf}
\includegraphics[width=3.7cm]{\pathPic FastMarchingIllus/VladBench/Snail2_New/Detail_501.pdf}
\includegraphics[width=3.8cm]{\pathPic FastMarchingIllus/VladBench/Snail2_New/Detail_1001.pdf}
}
\end{center}
\caption{
\label{fig:neigh}
Reference solution for the third test case (left). 
The Riemannian metric $\cM$ is anisotropic only on a thin band along a spiraling curve,  wide of a few grid points. Detail at resolutions $n\times n$, where $n$ equals $200$ (center left), $500$ (center right) and $1000$ (right).
}
\end{figure}

The second benchmark, discussed in \cite{VladThesis01,SV03}, is inspired by seismic imaging. 
There is 
no bias here towards axis-aligned anisotropy. 
As shown on the table Figure \ref{fig:tables}, the FM-LBR takes a smaller CPU time and offers a better accuracy than its alternatives.
Note that one can also construct configurations in which anisotropy is not axis-aligned \emph{and} the AGSI is more accurate than the FM-LBR. In some cases, the accuracy advantage of the AGSI even grows unboundedly as the anisotropy ratio of the metric tends to infinity. A heuristic analysis and prediction of this phenomenon is presented in Appendix \ref{subsec:Accuracy}, where it is illustrated with a fifth test case.\\  

The third \cite{BC10} and fourth test cases are relevant benchmarks if one's objective is to use fast marching methods for the segmentation of tubular structures, in medical images or volume data respectively.
The FM-LBR reveals its full potential, and stands out as the only practical option, in this more difficult setting which involves a discontinuous and highly anisotropic metric. 
The Riemannian metric tensor $\cM(z)$ is the identity matrix, except at points $z$ on the neighborhood of a curve $\Gamma$, where $\cM(z)$ has a small eigenvalue $\delta_0^2$ associated to an eigenvector tangent to $\Gamma$, and the other eigenvalue is $1$ on the orthogonal space. 
The shortest path joining the point $(1,-1)$ (resp.\ $(0,0,3)$) to the origin is extracted via ``gradient descent on the Riemannian manifold $(\Omega, \cM)$'', see Appendix \ref{sec:Paths} for details:
\be
\label{descent}
\gamma'(t) = -\cM(\gamma(t))^{-1} \nabla \distC(\gamma(t)).
\ee
By construction of the Riemannian metric $\cM$, traveling close and tangentially to the curve $\Gamma$ is cheap. This is reflected by the level lines of $\distC$, and by the allure of the minimal path, see Figures \ref{fig:Vlad1}, \ref{fig:neigh} and \ref{fig3d}.
Heuristically, this path joins the neighborhood of the curve $\Gamma$ in straight line, almost orthogonally, and then follows it.
The alignment of the minimal path with the direction of anisotropy, observed in this test case, is not an uncommon phenomenon.
The FM-LBR presumably benefits a lot from this behavior in terms of accuracy, since its stencils typically provide a good ``angular resolution'' in the direction of anisotropy, see Figures \ref{fig:BasisRotate}, \ref{fig:BasisScale}, \ref{fig:neigh}. Since in addition the stencil radii remain rather small for most anisotropy orientations, see \cite{M12b} for details, usually most updates for points in the ``fast band'' come from the fast band when the FM-LBR is run on these examples. When the fast band is missed however, accuracy degrades and zig-zag artifacts appear in the extracted path, see Figure \ref{fig:path}.

\begin{figure}
\begin{center}
\iftoggle{siam}{
\includegraphics[width=4cm]{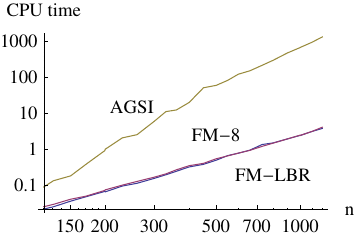}
\includegraphics[width=4cm]{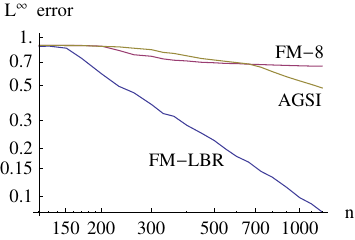} 
\includegraphics[width=4cm]{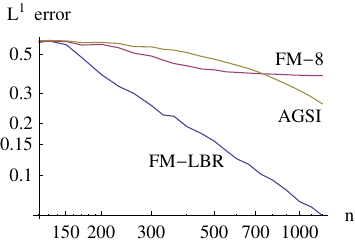}
}{
\includegraphics[width=5cm]{\pathPic FastMarchingIllus/VladBench/Snail2_New/CPUtimeFinsler.pdf}
\includegraphics[width=5cm]{\pathPic FastMarchingIllus/VladBench/Snail2_New/LInf.pdf} 
\includegraphics[width=5cm]{\pathPic FastMarchingIllus/VladBench/Snail2_New/L1.pdf}
}
\end{center}
\caption{
\label{fig2d}
CPU Time (left, in seconds), $L^\infty$ error (center), and averaged $L^1$ error (right) of the FM-LBR, FM-8 and AGSI, at several resolutions ranging from $120$ to $1200$ (log-log scale). 
}
\end{figure}

Third test case \cite{BC10}, in 2D. The different method's performance is illustrated on Figure \ref{fig2d}, except for the MAOUM which showed a poor accuracy, presumably due to the huge stencils it generated. 
The CPU time/resolution curve of the AGSI shows a stronger slope than for the one pass solvers, presumably reflecting its super-linear complexity $\cO(\mu(\cM) N^{\frac 3 2})$. 
The $L^\infty$ and $L^1$ error curves suggest that the FM-8 is not consistent in this test case, contrary to the FM-LBR and the AGSI. 
At the resolution $1000\times 1000$, typical in image analysis, the FM-LBR cuts the $L^\infty$ error by $80\%$ and the $L^1$ error by $75\%$ with respect to the AGSI, while reducing CPU time from 11 minutes to 2.5 seconds (!). As illustrated on Figure \ref{comp2d},  the better accuracy of the FM-LBR in this test case effectively translates into a better extraction of minimal paths. 
The reason for the, unrivaled, performance of the FM-LBR in this specific test case is partly elucidated in Appendix \ref{subsec:Accuracy}.\\

\begin{figure}
\begin{center}
\begin{tabular}{rccc}
Method $\sm$ Grid
& 200x200 & 500x500 & 1000x1000 \\
{\raise 12mm
\hbox{
FM-LBR
}}
&\includegraphics[width=3cm,height=3cm]{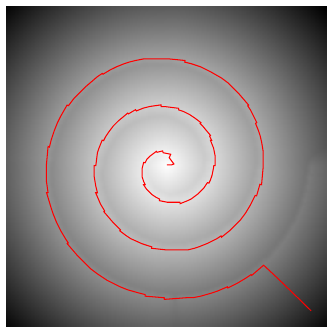}
&\includegraphics[width=3cm,height=3cm]{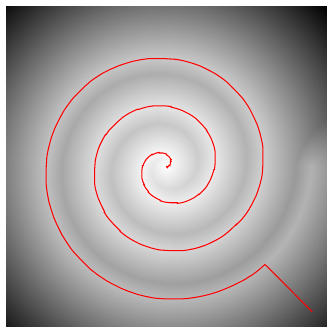}
&\includegraphics[width=3cm,height=3cm]{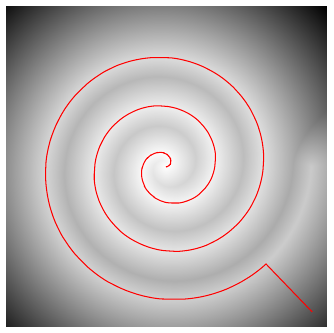}\\
{\raise 12mm
\hbox{
FM-8
}}
&\includegraphics[width=3cm,height=3cm]{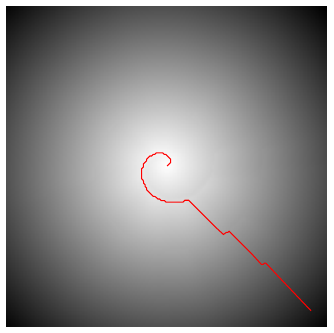}
&\includegraphics[width=3cm,height=3cm]{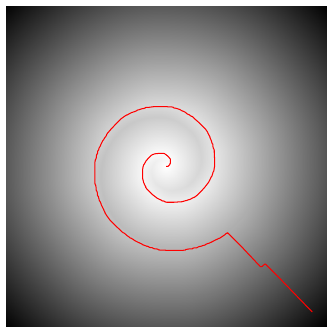}
&\includegraphics[width=3cm,height=3cm]{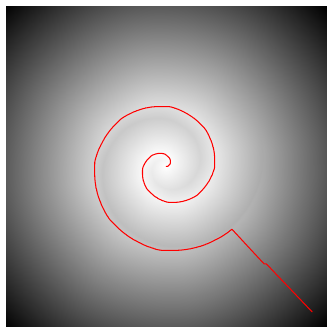}\\
{\raise 12mm
\hbox{
AGSI
}}
&\includegraphics[width=3cm,height=3cm]{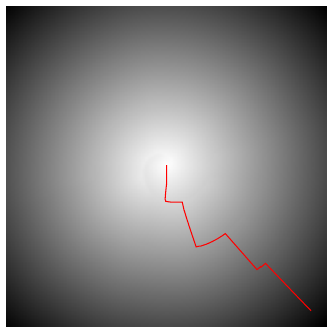}
&\includegraphics[width=3cm,height=3cm]{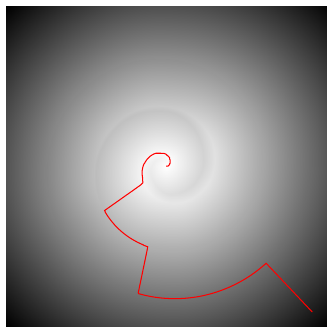}
&\includegraphics[width=3cm,height=3cm]{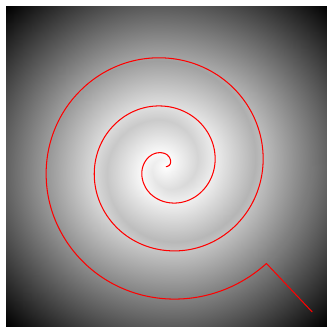}
\end{tabular}
\caption{Visual comparison of the accuracy of three algorithms, at three resolutions, in the 2D test case. Qualitatively, the approximate geodesic has the right behavior for a resolution as low as $170 \times 170$ with the FM-LBR, and $1000 \times 1000$ with the AGSI. 
This is presumably never the case for the FM-8, which is not consistent here. 
}
\label{comp2d}
\end{center}
\end{figure}

\begin{figure}
\begin{center}
\includegraphics[width=5cm]{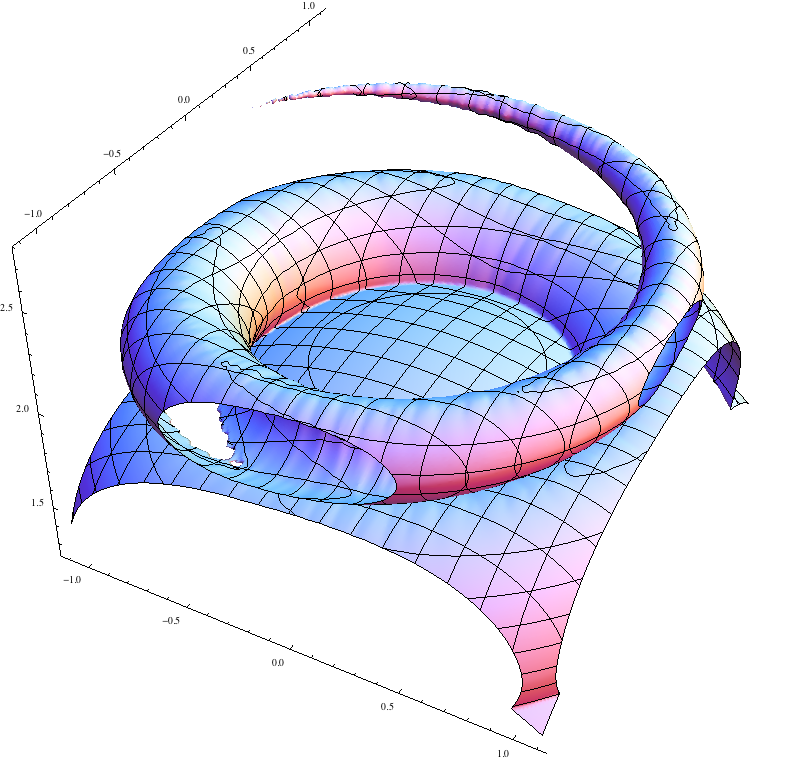}
{\raise 3mm \hbox{
\includegraphics[width=3cm]{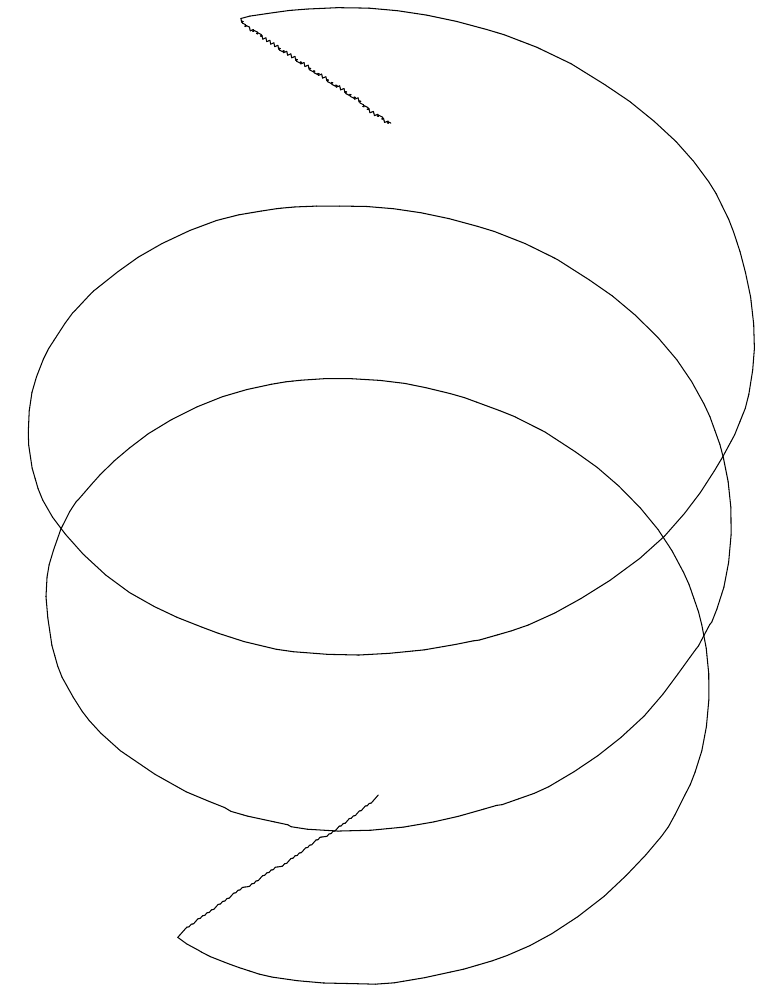}
}}
{\raise 10mm \hbox{
\includegraphics[width=4cm]{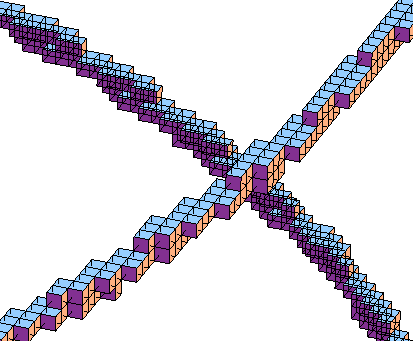}
}}
\end{center}
\caption{\label{fig3d} Results of the FM-LBR in the fourth, 3D, test case. Iso-surface $\{\dist(z)=2\}$ (left), and shortest path joining the points $(0,0,0)$ and $(3,0,0)$ (center). Detail of the discrete points (represented by small cubes), in the neighborhood of the curve $\Gamma(t)=(\cos \omega_0 t, \sin \omega_0 t, t)$, for which the Riemannian metric is not euclidean (right).}
\end{figure}

Fourth test case, in 3D.
CPU time was 105s for the FM-LBR, while the AGSI took 480s and failed to recover the minimal path presented on Figure \ref{fig3d} (center) (a straight line joining the two endpoints was obtained instead). 
The FM-LBR is capable of addressing a large scale (more than $10$ millions grid points), strongly anisotropic ($\kappa(\cM)=50$) three dimensional shortest path problem, with a good accuracy and within reasonable CPU time on standard laptop computer.

\begin{remark} The four application inspired test cases. Outflow boundary conditions, except for $\distC(0)=0$; hence the solution is the Riemannian distance to the origin: $\distC(z) = \distC(z,0)$. 
Numerical errors with respect to a $4000 \times 4000$ reference solution, bi-linearly interpolated, obtained in cases 1,2, with the AGSI, and in case 3 with the FM-ASR \cite{M12c}.
\label{rem:ExactDescription}
\begin{enumerate}
\item (Geometry processing \cite{VladThesis01}, $\kappa(\cM)\simeq 5.1$)
Compute the Riemannian distance from the origin $(0,0,0)$ on parametric surface of height map $z(x,y) := (3/4) \allowbreak \sin (3 \pi x) \sin (3 \pi y)$. Riemannian metric: $\cM(x,y) = \Id + \nabla z(x,y) \nabla z(x,y)^\trans$.
Coordinates $(x,y)$ restricted to the square $[-0.5,0.5]^2$, discretized on a $292\times 292$ grid, or in a second step to this square rotated by the indicated angle $\theta$.
\item (Seismic imaging \cite{VladThesis01,SV03}, $\kappa(\cM)=4$) 
The metric $\cM(x,y)$ has the two eigenvalues $0.8^{-2}$, $0.2^{-2}$, the former associated to the eigenvector $(1, (\pi/2) \cos (4 \pi x))$. 
Domain $[0.5,0.5]^2$, discretized on a $193 \times 193$ grid. 
\item (Tubular segmentation \cite{BC10}, $\kappa(\cM)=100$)
Define the curve $\Gamma(t) = t (\cos \omega_0 t, \sin \omega_0 t)$, $t \in [0,1]$. Set $\cM(z) = \Id$, except if there exists $0\leq t \leq 1$ and $0 \leq r \leq r_0$ such that $z = \Gamma(t)+r (\cos \omega_0 t, \, \sin \omega_0 t)$. In that case $\cM(z)$ has the eigenvalues $\delta_0^2$ and $1$, the former with eigenvector $\Gamma'(t)$. Parameters: $\omega_0 := 6 \pi$, $r_0 := \delta_0:= 0.01$. Domain: $[-1,1]^2$, grid sizes $n \times n$ with $120 \leq n \leq 1200$. 
\item (Tubular segmentation, $\kappa(\cM)=50$, 3D)
Define the curve 
$\Gamma(t) = (\cos \omega_0 t, \sin \omega_0 t, t)$, with $\omega_0 := (5/2) \pi$. Set $\cM(z)=\Id$, except if there exists $t, \lambda, \mu \in \R$ such that $z=\Gamma(t)+(\lambda \cos\omega_0 t, \lambda \sin \omega_0 t, \mu)$ and $\lambda^2+ \mu^2 \leq (r_0/2)^2$.  In that case $\cM(z)$ has the eigenvalues $\delta_0^2$ and $1$, the former with eigenvector $\Gamma'(t)$ and the latter with multiplicity $2$. Parameters: $\omega_0 := (5/2) \pi$, $\delta_0 = r_0 = 0.02$. Domain: $[-1.1,1.1]^2 \times [0,3]$, grid size  $200\times 200 \times 272$. 
\end{enumerate}
\end{remark}

%

\section*{Conclusion}

The FM-LBR, introduced in this paper, combines the Fast Marching algorithm with a concept from discrete geometry named Lattice Basis Reduction. It has the following strongpoints.
(I, Convergence) 
The FM-LBR is consistent for the anisotropic eikonal equation associated to any continuous Riemannian metric, of arbitrary anisotropy. 
(II, Complexity) It has a numerical cost comparable to classical isotropic Fast Marching, 
 independently of the problem anisotropy. 
(III, Accuracy) The accuracy of the FM-LBR is competitive in general, and striking in test cases, related to tubular segmentation in medical images, where the Riemannian metric has a pronounced anisotropy close to and tangentially to a curve.

These qualities come at the price of the specialization of the FM-LBR: 
(i) the Riemannian metric may not be replaced with a more general Finsler metric, see \cite{M12c} for an adaptation to this setting in 2D, (ii) the domain needs to be discretized on a cartesian grid, and (iii) of dimension $2$ or $3$. Hopefully these requirements are met in many applications, and future work will be devoted to the application of the proposed algorithm in the context of medical image processing.

\paragraph{Acknowledgement.}
The author thanks Pr A. Vladimirsky for constructive discussions on the choice of test cases and the FM-LBR accuracy, and Pr P. Q. Nguyen for pointing out the notion of obtuse superbase of a lattice.

\appendix

\section{Robust extraction of minimal paths}
\label{sec:Paths}

Obtaining the shortest path joining two given points is essential in motion planning control problems \cite{AltonMitchell12}, as well as in the envisioned application to tubular structure centerline extraction \cite{BC10}.
This involves solving the Ordinary Differential Equation (ODE) \eqref{descent}, a task less trivial than it seems. 
The author is conscious that a comparison of minimal paths, as on Figure \ref{comp2d}, reflects the properties of the ODE solver (and the time spent adjusting its sometimes numerous parameters), as much as those of the eikonal solver. This is done nevertheless due to the importance of minimal paths in applications.
Eikonal solvers based on the discrete fixed point problem \eqref{discreteSys}, such as the FM-LBR, FM-8 and AGSI, provide at each grid point $x \in \Omega \cap Z$ an estimate $\dist(x)$ of the distance $\distC(x)$, and in addition an estimate $v(x)$ of the direction and orientation of the distorted negative gradient $-\cM(x)^{-1} \nabla \distC (x)$, of the form: 
\be
\label{defvz}
v(x) := \sum_{1 \leq j \leq k} \alpha_j (z_j-x), 
\ee
where the integer $1 \leq k \leq \dim$, the \emph{positive} coefficients $(\alpha_j)_{j=1}^k$ and the vertices $(z_j)_{j=1}^k$ of $\partial V(z)$ are the barycentric coordinates of the point $y \in \partial V(x)$ achieving the minimum in the Hopf-Lax update operator \eqref{def:HopfLax}, in the face of $\partial V(x)$ containing $y$ and of minimal dimension. 

\begin{figure}
\begin{center}
\iftoggle{siam}{
\includegraphics[width=3cm]{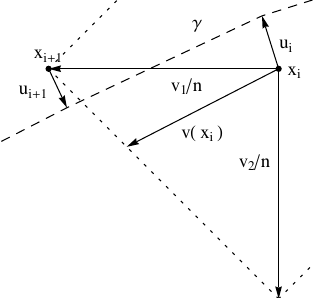}
\includegraphics[width=3cm]{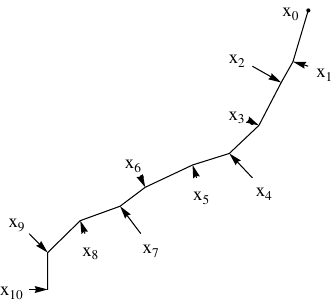}
\includegraphics[width=3cm]{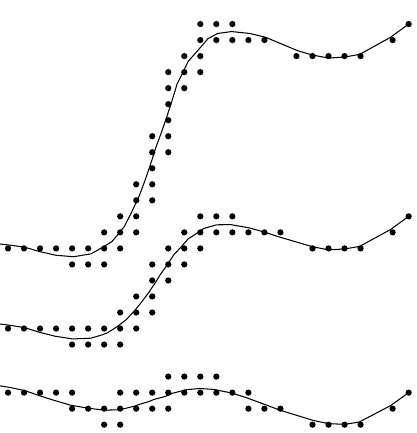}
\includegraphics[width=3cm]{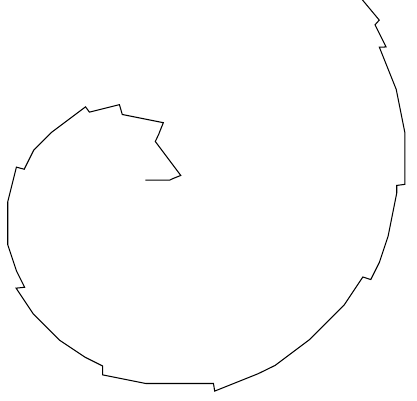}
}{
\includegraphics[width=3.8cm]{\pathPic FastMarchingIllus/Geodesics/GeoNotations.pdf}
\includegraphics[width=3.8cm]{\pathPic FastMarchingIllus/Geodesics/pathPointArrows.pdf}
\includegraphics[width=3.8cm]{\pathPic FastMarchingIllus/Geodesics/GeoVladEasy.pdf}
\includegraphics[width=3.8cm]{\pathPic FastMarchingIllus/Geodesics/GeoZigZag.pdf}
}
\end{center}
\caption{
\label{fig:path}
Left: Notations for the minimal path computation; the contour of the stencil $V(x_i)$ is shown dotted.
Center left: Grid points $x_0,\cdots,x_r \in \Omega \cap Z$,  corrections $u_0, \cdots, u_r \in \R^\dim$ shown as arrows, and piecewise linear path $\gamma$.
Center right: Grid Points $(x_i)_{i=1}^r$ and piecewise linear path $\gamma$ in the second test case at resolution $100\times 100$, 
using the FM-LBR. 
Right: In hard test cases, combining strong anisotropy, metric discontinuity, and low grid resolution, the extracted path exhibits zig-zag artifacts (detail of the third test case at resolution $n=200$).
}
\end{figure}

From this point, a typical approach to solve \eqref{descent} is to extend the values of $\dist$ or $v$ to the continuous domain $\Omega$ via an interpolation procedure, and then to use a black box ODE solver or a Runge Kutta method. 
Note that the accuracy usually expected from these high order methods is mostly doomed, since the discretization \eqref{discreteSys} of the eikonal equation is only first order, and since the vector field $\cM^{-1}\nabla \distC$ is discontinuous both at ``caustics'' and discontinuities of $\cM$.
A more significant issue is that computations frequently get stuck, despite the use of state of the art and/or commercial interpolation methods and ODE solvers, see e.g.\ \cite{AltonMitchell12} Figure 5.10.\\

We propose a method for the computation of minimal paths, which trades high order accuracy for robustness, and never gets stuck if the eikonal solver is Dijkstra inspired.  It takes advantage of the specific form \eqref{discreteSys} of the discretization of the eikonal equation, and does not rely on black box routines. It is also parameter free: there is not interpolation order or gradient step to adjust. 

\begin{algorithm}
\caption{
Minimal path computation, starting from a given grid point $x_0\in \Omega \cap Z$.
}
\label{algo:Path}
{\bf Initialisation:} $x \leftarrow x_0$, $u \leftarrow 0$ (mutable variables).\\
{\bf While} $x$ is not an initial source point, {\bf do}\\ 
 \begin{tabular}{cl}
 &Denote by $z_1, \cdots, z_k$ the grid points appearing in the expression \eqref{defvz} of $v(x)$.\\
 &Find $\lambda \in \R_+$, $1 \leq j \leq k$, which minimize $\|x+ u + \lambda v(x) - z_j \|$. \\ 
 &Perform updates $u \leftarrow x+u + \lambda v(x) - z_j$ and  $x \leftarrow z_j$.\\
 \end{tabular}
\end{algorithm}

The successive iterations of Algorithm \ref{algo:Path} generate grid points $x_0, \cdots, x_r \in Z$, and small correcting offsets $u_0, \cdots, u_r \in \R^m$.
If the causality property holds, see  Proposition \ref{prop:Causality}, then the values $(\dist(x_i))_{i=0}^r$ are strictly decreasing, so that the algorithm is guaranteed to terminate.
The piecewise linear path $\gamma : [0,r] \to \Omega$, parametrized so that $\gamma(i) := x_i+u_i$, satisfies the differential equation
\be
\label{paramGamma}
\gamma'(t) =  \lambda_{\lfloor t \rfloor} v(x_{\lfloor t \rfloor}), 
\ee
for non-integer $t \in [0,r]$, where the constants $(\lambda_i)_{i=0}^{r-1}$ are the minimizers in the second step of the while loop, and the vector $v(z)$, $z\in \Omega \cap Z$, is defined in \eqref{defvz}. The particularity of our path extraction method is that the direction field $v$ is not evaluated on the curve $\gamma$, but at the points $(x_i)_{i=0}^r$ which remain close as shown by the next proposition (the involved exponent 3 seems to be either over-estimated or a rare worst case scenario, in view of the experiments Figure \ref{fig:path}). 

\begin{proposition}
Let $\Omega \subset \R^2$  be an open bounded set, equipped with a Riemannian metric $\cM \in C^0(\overline \Omega, S_2^+)$, and an admissible family $(\cT(z))_{z \in \Omega}$ of meshes with ``Boundedness'' constant $R$. (See Definition \ref{def:Admissibility}, $R \lesssim \kappa(\cM)$ for the FM-LBR). Consider a cartesian grid $Z$ of scale $h$, equipped with the corresponding stencils, see \eqref{def:Grid} and \eqref{eq:VFromT}, and solve the discrete system \eqref{discreteSys}.
Let $\gamma \in C^0([0,r], \Omega)$ be a path extracted with Algorithm \ref{algo:Path} and parametrized as in \eqref{paramGamma}. Then for all $t\in [0,r]$
\be
\label{upperGammaX}
\| \gamma(t) - x_{\lfloor t \rfloor} \| \leq C h R^3,
\ee
where $C$ is an absolute constant (i.e.\ independent of $\Omega$, $(\cT(z))_{z \in \Omega}$, $\cM$, or the path origin).
\end{proposition}

The rest of this appendix is devoted to the proof.
Consider a fixed $0 \leq i < r$, and observe that for all $t\in [i,i+1[$ one has, since $\gamma$ is linear on this interval and since $\gamma(i) = x_i+u_i$:
\begin{equation}
\label{eq:gammaDist}
\| \gamma(t) - x_{\lfloor t \rfloor} \| \leq \max \{ \| \gamma(i+1) - x_i\| ,\ \|\gamma(i) - x_i\| \} 
\leq \max \{ \| u_{i+1}\|  + \|x_{i+1} - x_i\|,\ \|u_i\|\}.
\end{equation}
In order to avoid notational clutter, we denote $x:=x_i$, $x':=x_{i+1}$, $u:=u_i$, $u':=u_{i+1}$ and $v := v(x_i)$.
Let $k \in \{1,2\}$ and let $z_j$, $1 \leq j \leq k$, be the vertices of $\partial V(x)$ appearing in the expression \eqref{defvz} of the discrete negative gradient $v$. 

We assume without loss of generality that the grid is $Z := \Z^2$, so that $v_j := z_j-x$ is a vertex of  $\partial \cT(x)$, for all $1 \leq j \leq k$. Hence $\|v_j\| \leq R$ by (Boundedness) and if $k =2$ then $\det(v_1, v_2)$ is a non-zero integer. In particular $\|x'-x\| = \|v_j\| \leq R$, for some $1 \leq j \leq k$.
By construction (second step of the while loop in the path computation), we have for any $\lambda \in \R_+$ and any $1 \leq j \leq k$
\be
\label{inequp}
\| u' \| \leq \| u + \lambda v - v_j\|.
\ee
We prove in the following an upper bound of the form $\|u'\| \leq \max \{\|u\|, C R^3\}$, which by an immediate induction argument implies $\|u_i\| \leq C R^3$ for any $1 \leq i \leq r$. Using \eqref{upperGammaX}, our previous estimate on $\|x_{i+1}-x_i\|$, and rescaling by a factor $h$, we obtain as announced \eqref{upperGammaX}.
Case $k=1$: choosing $\lambda=1$, $j=1$, and observing that $v = v_1$, we obtain $\|u'\| \leq \|u + 1 \times v - v_1\|= \|u\|$. The second case begins with a lemma.

\begin{lemma}
Let $v_1,v_2 \in \R^2$, and let $v := \alpha_1 v_1+\alpha_2 v_2$, with $\alpha_1, \alpha_2>0$.
Let $\mu := \sqrt 2$, and let $w_1 := v_1 \mu -  v_2/\mu$, $w_2 :=  v_2 \mu -  v_1 /\mu$.
Then one can choose $\lambda \in \R_+$ and $1 \leq j \leq 2$, such that $v_j - \lambda v$ is positively proportional to any of the following vectors: $w_1$, $w_2$, $-w_1$ or $-w_2$, with a proportionality constant $0 < \nu \leq \mu$.
\end{lemma}
\begin{proof}
In the case of $w_1$, choose $j:=1$, $\lambda := 1/(\alpha_1+ \mu^2 \alpha_2)$, so that $\nu = 1/(\mu + \mu^{-1}\alpha_1/ \alpha_2 ) \leq 1/\mu$.
In the case of $-w_1$, choose $j:=2$, $\lambda := 1/(\alpha_2+\mu^{-2} \alpha_1)$, so that $\nu =1/(\mu^{-1}+\mu \alpha_2/\alpha_1) \leq \mu$.
The cases of $w_2$ and $-w_2$ are similar. 
\end{proof}

Case $k=2$. We use the notations of the lemma. Denoting by $A$ the matrix of lines $w_1,w_2$, there exists 
$1 \leq k \leq 2$ and $\ve \in \{-1,1\}$ such that 
\begin{equation}
\label{uwLower}
 \sqrt 2 \<u, \ve w_k\>  \geq \sqrt{ \<u, w_1\>^2 + \<u, w_2\>^2 } = \|A u\|  \geq \|A^{-1}\|^{-1} \|u\|.
\end{equation}
The norm $\|A^{-1}\|^{-1}$ is estimated as follows. $|\det A| = (\mu^2-\mu^{-2}) |\det (v_1,v_2)| \geq \mu^2-\mu^{-2}$, since  $\det(v_1,v_2)$ is a non-zero integer. On the other hand $\|w_j\| \leq (\mu+\mu^{-1}) \max\{\|v_1\|,\|v_2\|\} \leq C_0 R$ with $C_0=2 (\mu+\mu^{-1})$, hence $\|A\| \leq \sqrt{\|w_1\|^2 +\|w_2\|^2}\leq \sqrt 2 C_0 R$. Finally $\|A^{-1}\|^{-1} = |\det A| / \|A\| \geq C_1 /R$ where $C_1 := (\mu-\mu^{-1})/(\sqrt 2 C_0)$. 

We next choose $\lambda$ and $j$, using the previous lemma, so that $v_j - \lambda v = \nu \ve  w_k$, with $\ve$, $k$ as in \eqref{uwLower}, and where $0 < \nu \leq \mu := \sqrt 2$.
Hence, using \eqref{inequp}:
\begin{align*}
\| u' \|^2 &\leq \| u - (v_j - \lambda v)\|  = \|u - \nu \ve w_k\|^2 = \|u\|^2 - 2 \nu \<u, \ve w_k\>+ \nu^2 \|w_k\|^2 \\
& \leq \|u\|^2 - 2 \nu \|A^{-1}\|^{-1} \|u\| + \nu^2 \|w_k\|^2 \leq \|u\|^2 - 2 C_1 \|u\|/R+ C_0^2 R^2.
\end{align*}

If $\|u\| \geq C_2 R^3$, with $C_2 := C_0^2/(2C_1)$, then $\|u'\| \leq \|u\|$. If $\|u\|$ is below this bound, then choosing $\lambda = 0$  in \eqref{inequp} yields $\|u'\| \leq \|u\|+\|v_1\| \leq C_2 R^3 + 2 R$. Thus $\|u'\| \leq \max \{\|u\|, C R^3\}$ as announced, which concludes the proof.

\section{Stencil size, discretization errors and metric regularity}
\label{subsec:Accuracy}

Experience in the discretization of Partial Differential Equations (PDEs) tells that robust and accurate numerical schemes are usually based on small, localized and isotropic stencils. The FM-LBR, which achieves causality in the eikonal equation by the use of long range, sparse and highly anisotropic stencils, seems to violate this principle. We give in this section a \emph{heuristic} analysis of its accuracy, which explains its excellent performance in the third and fourth tests (once a source of puzzlement for the author and the reviewers), but also exposes some weaknesses.

Let us emphasize that the computational domain $\Omega$ is equipped with two geometries.
\begin{itemize}
\item The \emph{extrinsic} Euclidean geometry, inherited from the embedding $\Omega \subset \R^\dim$. 
\item The \emph{intrinsic} Riemannian geometry, given by the Riemannian metric $\cM$ on $\Omega$, which is part of eikonal PDE structure.
\end{itemize}
The AGSI stencils are small, localized, with respect to the extrinsic Euclidean distance. The FM-LBR stencil at a point $z \in \Omega$ is built from an $\cM(z)$-reduced basis, see \S \ref{sec:MeshProperties}, which consists of the smallest linearly independent vectors of $\Z^\dim$ in the local norm $\|\cdot\|_{\cM(z)}$. Hence this stencil is by construction small and localized \emph{in the twisted perspective} of the intrinsic Riemannian distance. We refer to \cite{M12b} for a quantitative estimate of the size of the FM-LBR stencil, from this perspective, in average over all orientations of the discretization grid.
To better reflect the shapes of these stencils, which (as much as possible) adapt their orientation and aspect ratio to the metric $\cM$, but keep a constant volume 
(they are built of $(m+1)!$ simplices of volume $h^m/m!$), we introduce a normalized metric $\widehat \cM$: for all $z\in \Omega$
\begin{equation}
\label{normalizedM}
\widehat \cM(z) := \det(\cM(z))^{-\frac 1 \dim} \cM(z),
\end{equation}
so that $\det(\widehat \cM(z)) = 1$ identically. For all $x,y\in \overline \Omega$ we denote by $\distC(x,y)$ (resp.\ $\widehat \distC(x,y)$) the Riemannian distance \eqref{def:Length} on $\Omega$ associated to $\cM$ (resp.\ $\widehat \cM$).

Bellman's optimality principle applied to the solution $\distC$ of the eikonal equation \eqref{eikonal}, reads:  for all $x \in V \subset \Omega$
\begin{equation*}
\distC(x) = \min_{y \in \partial V} \distC(x,y) + \distC (y).
\end{equation*}
The Hopf-Lax update operator \eqref{def:HopfLax} reflects this identity at discrete level, up to two approximations:
\begin{align}
\label{approxM}
\distC(x,y) & \approx  \|x-y\|_{\cM(x)},\\ 
\label{approxD}
\distC (y) &\approx \interp_{V(x)} \dist (y),
\end{align}
where $V(x)$ denotes the stencil at $x$ (which is given under the form of a triangulation of a neighborhood of $x$), $\interp_V$ the linear interpolation operator on $V$, and $y \in \partial V$. 

If one ignores the issue of the scheme causality, then the choice of stencil should be dictated by the local regularity properties of the quantities that are approximated on it. 
Discretization \eqref{approxM} amounts to approximating the metric $\cM$ with the constant $\cM(x)$ on the stencil $V(x)$. Such piecewise constant approximation errors are controlled by Lipschitz regularity constants. The natural distance on $S_\dim^+$ is defined in \eqref{def:dtimes}, but the distance on $\Omega$ should be chosen appropriately so as to reflect the geometry of the computational stencils. Indeed, the AGSI (resp.\ FM-LBR) stencil at a point $x\in \Omega$ is heuristically not much different from ball centered at $x$ and of radius the grid scale $h$, defined with respect to the euclidian distance (resp.\ Riemannian distance $\widehat D$). 
The AGSI stencil should therefore be preferred if the metric $\cM$ has a small Lipschitz regularity constant $K_0$ with respect to the extrinsic Euclidean distance:
\begin{equation}
\label{regME}
\dtimes(\cM(x), \cM(y)) \leq K_0 \|x-y\|.
\end{equation}
On the other hand, the FM-LBR stencil is more suitable for metrics which have a small Lipschitz regularity constant $K_1$ with respect to the intrinsic (up to the normalization \eqref{normalizedM}) distance $\widehat D$:
\begin{equation}
\label{regMD}
\dtimes(\cM(x), \cM(y)) \leq K_1 \widehat D(x,y).
\end{equation}
In the fifth numerical example below, the most accurate of these two methods can indeed be guessed from the ratio $K_0/K_1$.
Regularity conditions of the form \eqref{regMD} arise naturally in the study of anisotropic mesh generation, see Part III of \cite{Mirebeau10}. 
The Riemannian metric involved in the third and fourth numerical tests of this section, inspired by applications to tubular structure segmentation \cite{BC10}, varies slowly in the direction of the eigenvector associated to the small eigenvalue of $\cM(z)$ (the direction of the tube), but quickly (in fact discontinuously) in the orthogonal direction. Thus, although discontinuous, it is heuristically not far from satisfying \eqref{regMD}, which explains the exceptional performance of the FM-LBR on these specific examples.

The interpolation error \eqref{approxD} is harder to estimate, yet in favor of the FM-LBR stencil let us mention the intrinsic $1$-Lipschitz regularity of the solution: for all $x,y \in \Omega$
\begin{equation*}
|D(x) - D(y) | \leq D(x,y).
\end{equation*}
In the special case of a constant metric, where \eqref{approxM} is exact and all error comes from \eqref{approxD}, the FM-LBR stencil offers the best accuracy, see \cite{M12b}. \\
%

The following illustrative example was proposed by \Vladimirsky : let $M \in S_\dim^+$, let $u \in \R^\dim$,  and let $\cM : \R^\dim \to S_\dim^+$ be the Riemannian metric defined by 
\begin{equation}
\label{def:MExp}
\cM(z) := M \exp( \<u, z\>).
\end{equation}
Assume for normalization that $\det (M)=1$, so that $\widehat \cM = M$ identically, and $\widehat D(x,y) = \|x-y\|_{M}$ for all $x,y \in \R^\dim$. Then, with $D := M^{-1}$:
\begin{align*}
\dtimes(\cM(x), \cM(y)) = |\<u, x-y\>| &\leq  \|u\| \|x-y\|,\\
\dtimes(\cM(x), \cM(y)) = |\<u, x-y\>| &\leq \|u\|_{D} \|x-y\|_{M} = \|u\|_{D} \widehat D(x,y).
\end{align*} 
The Lipschitz regularity constants are therefore $K_0 = \|u\|$, and $K_1 = \|u\|_{D}$. The discretization \eqref{approxM} is hence likely more accuracte with the AGSI stencil if $\|u\| \leq \|u\|_D$, and with the FM-LBR stencil otherwise. Defining for all $z\in \R^\dim$
\begin{equation}
\label{def:DExp}
\dist(z) := \exp(\<u,z\>)/\|u\|_D,
\end{equation}
we observe that this map is unimodular: $\|\nabla \dist (z) \|_{\cM(z)^{-1}} = 1$. The value $\dist(z)$ can also be regarded as the geodesic distance from $z$ to a point at infinity in the direction of $-D u$.
The characteristic curves of the solution are parallel straight lines, of direction $D u$. 
Although the present discussion is about accuracy rather than CPU time, let us mention that thanks to this special property, and as pointed out by A. Vladimirsky, the AGSI converges in a single pass in the numerical tests below (provided its priority queue is suitably initialized:  the upwind boundary points $z$ must be sorted by increasing values of their scalar product $\<z, D u \>$ with the characteristics direction $D u$). 
As a result, and in contrast with \S \ref{sec:num}, the AGSI CPU time is here only \emph{half} of the FM-LBR CPU time.

Our fifth and last numerical test, involves a metric depending on three parameters $\kappa\geq 1$, $\theta, \vp\in \R$: denoting $e_\theta := (\cos \theta, \sin \theta)$, and $(x,y)^\perp := (y,-x)$,
\begin{equation}
\label{parametrizedMetric}
\cM(z; \, \kappa, \theta, \vp) := M(\kappa, \theta) \exp(\<z, e_\vp\>), \ \text{ with } M(\kappa, \theta) := \kappa e_\theta e_\theta^\trans + \kappa^{-1} e_\theta^\perp (e_\theta^\perp)^\trans .
\end{equation}
For each matrix $M = M(\theta, \vp)$, given by its condition number $\kappa$, and anisotropy direction $\theta$, we consider different unit vectors $u = e_\vp$, given by their angle $\vp$ with respect to the $x$-axis. For $\vp = \theta$ the FM-LBR is favored, since $K_0/K_1$ takes its maximal value $\sqrt {\kappa}$. For $\vp = -\pi/4$, the AGSI is favored, since $K_0/K_1$ is close to its minimal value $1/\sqrt \kappa$, and the interpolation error \eqref{approxD} vanishes.
The domain is the $[-2,2]^2$ square discretized on a $100 \times 100$ grid. The FM-LBR stencil is included in a square of $(2 w+1) \times (2 w+1)$ pixels, where $w$ respectively equals $2$, $3$, and $5$ for the three different pairs $(\kappa,\theta)$ in Table \ref{table:ExpMet}. 
The boundary condition \eqref{def:DExp} is applied on a the upwind part of the square boundary, on a layer of width $w$. The numerical tests presented in Table \ref{table:ExpMet} are typical of the authors experience, and tend to agree with the above heuristical error analysis. Note however that the FM-LBR performance is unexpectedly bad%
\footnote{%
The FM-LBR in that case produces large errors close to the \emph{downwind} boundary, presumably due to its large stencils.  Removing a $5$ pixel band on the boundary yields in this case the $L^\infty$ errors: 32.3 (FM-LBR) and 45.8 (AGSI), in favor of the FM-LBR as predicted from the Lipschitz constants ratio. 
} in case $^*$, and unexpectdly good in cases $\dagger$.

The accuracy advantage of the AGSI is larger than a factor $6$ for the last set of parameters in Table \ref{table:ExpMet}, and it may of course grow unboundedly as the anisotropy ratio $\kappa$ tends to infinity, for suitable angles $\theta, \vp$. Anisotropy does therefore, sometimes, play against the FM-LBR accuracy.

\begin{table}

\begin{center}
\begin{tabular}{|c|c|c|c|c|c|c|c|}
\hline
\multirow{2}{*}{$\kappa$} & \multirow{2}{*}{$\theta$} & \multirow{2}{*}{$\vp$} & $K_0/K_1$ & 
\multicolumn{2}{c|}{$L^1$ error} & \multicolumn{2}{c|}{$L^\infty$ error}  \\ 
& & & $=\|u\|/\|u\|_D$   & FM-LBR & AGSI & FM-LBR & AGSI \\ 
\hline
\hline
\multirow{3}{*}{3} & \multirow{3}{*}{$\pi/3$} & 
        $\pi/3$  & 1.73 & \tr{2.78} & 6.96 & \tr{40.2} & 60.5 \\ 
 & & $\pi/6$ & 1 & 2.80 & 3.07 & 17.1 & 18.5 \\ 
 & & $- \pi/4$ & 0.59 & 3.95 & \tr{1.45} & 37.4 & \tr{13.7} \\ 
\hline
\hline
\multirow{3}{*}{10} & \multirow{3}{*}{$3 \pi/8$} & 
        $3 \pi/8$ & 3.16 & \tr{3.74} & 9.45 & \tr{38.4} & 79.5 \\ 
&&   1.48 & 1      & 1.34 & 2.03 & 7.44 & 11.4 \\ 
&&   $-\pi/4$ & 0.34 & 2.92 & \tr{0.83} & 27.3 & \tr{7.82} \\ 
\hline
\hline
\multirow{3}{*}{30} & \multirow{3}{*}{$\pi/8$} & 
        $\pi/8$ & 5.48 & \tr{3.89} & 6.62 & 94.3$^*$ & \tr{61.0} \\ 
    &&  0.57 & 1      & \tr{0.91}$^\dagger$ & 2.55 & \tr{4.62}$^\dagger$ & 12.9 \\ 
    && $-\pi/4$ & 0.20 & 3.24 & \tr{0.49} & 29.4 & \tr{4.51} \\ 
    \hline
\end{tabular}
\end{center}

\caption{Metric $z \mapsto \cM(z; \,\kappa, \theta, \vp)$, see \eqref{parametrizedMetric}, on the $[-2,2]^2$ square discretized on a $100\times 100$ grid. The most accurate algorithm, among the AGSI and the FM-LBR, can in most cases be predicted by the Lipschitz constants ratio $K_0/K_1$, at least when it is far from $1$. 
The star points out an exception. 
CPU time approximatively $10$ ms for the AGSI, and $20$ ms for the FM-LBR.
Numerical errors multiplied by 100 for better readability.}
\label{table:ExpMet}
\end{table}

\end{document}